\theoremstyle{plain}
\newtheorem{thm}{Theorem}[section]
\newtheorem{cor}[thm]{Corollary}
\newtheorem{lem}[thm]{Lemma}
\newtheorem{prop}[thm]{Proposition}
\def\@rst #1 #2other{#1}
\newcommand\MR[1]{\relax\ifhmode\unskip\spacefactor3000 \space\fi
  \MRhref{\expandafter\@rst #1 other}{#1}}
\newcommand{\MRhref}[2]{\href{http://www.ams.org/mathscinet-getitem?mr=#1}{MR#2}}
\theoremstyle{definition}
\newtheorem{remark}[thm]{Remark}
\numberwithin{equation}{section}
\newcommand{\dsb}{\begin{adjustwidth}{2.5em}{0pt}
\begin{footnotesize}}
\newcommand{\dse}{\end{footnotesize}
\end{adjustwidth}}
\newcommand{\ssb}{\begin{adjustwidth}{2.5em}{0pt}}
\newcommand{\sse}{\end{adjustwidth}}
\newcommand{\aryb}{\begin{eqnarray*}}
\newcommand{\arye}{\end{eqnarray*}}
\def\alb#1\ale{\begin{align*}#1\end{align*}}
\def\allb#1\alle{\begin{align}#1\end{align}}
\newcommand{\eqb}{\begin{equation}}
\newcommand{\eqe}{\end{equation}}
\newcommand{\eqbn}{\begin{equation*}}
\newcommand{\eqen}{\end{equation*}}
\newcommand{\BB}{\mathbbm}
\newcommand{\op}{\operatorname}
\newcommand{\frk}{\mathfrak}
\newcommand{\ep}{\varepsilon}
\newcommand{\rta}{\rightarrow}
\newcommand{\mcl}{\mathcal}
\newcommand{\bdy}{\partial}
\newcommand{\rng}{\mathring}
\newcommand{\B}{\mathcal B}
\newcommand{\ccM}{{\mathbf{c}_{\mathrm M}}}
\let\originalleft\left
\let\originalright\right
\renewcommand{\left}{\mathopen{}\mathclose\bgroup\originalleft}
\renewcommand{\right}{\aftergroup\egroup\originalright}
\title{The distance exponent for Liouville first passage percolation is positive}
 \date{ }
 \author{
\begin{tabular}{c} Jian Ding\\[-5pt]\small University of Pennsylvania \end{tabular}
\begin{tabular}{c} Ewain Gwynne\\[-5pt]\small University of Chicago \end{tabular}
\begin{tabular}{c} Avelio Sep\'ulveda \\[-5pt]\small Universidad de Chile \end{tabular}
}
\begin{document}

\maketitle

\begin{abstract} 
Discrete \emph{Liouville first passage percolation (LFPP)} with parameter $\xi > 0$ is the random metric on a sub-graph of $\mathbb Z^2$ obtained by assigning each vertex $z$ a weight of $e^{\xi h(z)}$, where $h$ is the discrete Gaussian free field. We show that the distance exponent for discrete LFPP is strictly positive for all $\xi  > 0$. More precisely, the discrete LFPP distance between the inner and outer boundaries of a discrete annulus of size $2^n$ is typically at least $2^{\alpha n}$ for an exponent $\alpha > 0$ depending on $\xi$. This is a crucial input in the proof that LFPP admits non-trivial subsequential scaling limits for all $\xi > 0$ and also has theoretical implications for the study of distances in Liouville quantum gravity. 
\end{abstract}

\section{Introduction}
\label{sec-intro}

\subsection{Main result}
\label{sec-main-result}

For $n\in\BB N$, let
\eqbn
\mcl B_n := [-2^n , 2^n]^2 \cap \BB Z^2 
\eqen
and let $h_n$ be the discrete Gaussian free field on $\mcl B_n$, with zero boundary conditions. That is, $h_n$ is the centered Gaussian process such that\footnote{The reason for the factor of $\pi/2$ here is to allow us to compare the discrete and continuum variants of LFPP with the same value of $\xi$, c.f.~\cite{ang-discrete-lfpp}. This choice of constant makes it so that the variance of $h_n(0)$ is asymptotic to $\log(n)$.} 
\begin{equation}\label{eqn-Green}
\BB E\left[ h_n(z) h_n(w) \right] = \frac{\pi}{2} \op{Gr}_{\mcl B_n}(z,w) 
\end{equation}
where $\op{Gr}_{\mcl B_n}$ is the Green's function for simple random walk on $\BB Z^2$ killed when it hits the boundary of $\mcl B_n$. Here and throughout the paper, the boundary of a subset $A$ of $\BB Z^2$ is the set of vertices in $A$ which are joined by nearest-neighbor edges to vertices which are not in $A$.

For $\xi > 0$, we define the \emph{Liouville first passage percolation} (LFPP) metric with parameter $\xi$ associated with $h_n$ by 
\eqb \label{eqn-lfpp-def}
D_n(z,w) = \inf_{P : z\rta w} \sum_{j=0}^{|P|} e^{\xi h_n(P(j))}  ,\quad\forall z,w\in \mcl B_n
\eqe
where the infimum is over all nearest-neighbor paths $P: [0,|P|] \cap \BB Z \rta \mcl B_n$ with $P(0) = z$ and $P(|P|) = w$. We note that $D_n$ is not quite a metric since $D_n(z,z) = e^{\xi h_n(z)}$, but $D_n$ is symmetric and satisfies the triangle inequality.  

We define the square annulus
\eqb \label{eqn-annulus-def0}
\mcl A_n := \left( [-2^{n-1/2} , 2^{n-1/2}]^2 \cap \BB Z^2 \right) \setminus \left( [-2^{n-1} , 2^{n-1}]^2 \cap \BB Z^2 \right)  \subset \mcl B_n
\eqe
and we define $D_n\left(\text{across $\mcl A_n$} \right)$ to be the $D_n$-distance between the inner and outer boundaries of $\mcl A_n$. 
The main result of this paper is that the distance exponent associated with $D_n$ is strictly positive for every $\xi > 0$, in the following sense. 

\begin{thm} \label{thm-lfpp-lower}
For each $q \in (0,1)$, there are constants $c_0,c_1 >0$ depending only on $q$ such that for each $\xi  >0$, 
\eqb \label{eqn-lfpp-lower}
\liminf_{n\rta\infty} \BB P\left[ D_n\left(\text{across $\mcl A_n$} \right) \geq  \exp\left( c_0 e^{-c_1 \xi} n \right)  \right] \geq q .
\eqe 
\end{thm}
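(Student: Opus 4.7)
I would set up a dyadic-scale bootstrap on the log-quantile. Let $L_n(q)$ denote the $q$-quantile of $\log D_n(\text{across }\mcl A_n)$. The target estimate is $L_n(q) \geq \alpha n - O(1)$ with $\alpha \geq c_0 e^{-c_1\xi}$. A uniform lower bound $L_{n_0}(q) \geq -C$ for some fixed $n_0$ is cheap (e.g.\ bound $D_n(\text{across}\,\mcl A_n)$ below by $\min_{z \in \mcl A_n} e^{\xi h_n(z)}$ and use Gaussian maximum tail bounds), so it suffices to prove an additive amplification
\[
L_{n+k}(q') \geq L_n(q) + \alpha k - O(1)
\]
for some $k = k(\xi)$ and some transfer between quantiles $q \mapsto q'$ that can be absorbed back via concentration; iterating then yields the theorem.

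\paragraph{Scale decomposition and independence.} The central ingredient is the standard decomposition $h_{n+k} = \phi + \psi$, where $\phi$ is the ``coarse'' field obtained by Gaussian averaging on scale $2^n$ (essentially a smoothed GFF) and $\psi$ is a ``fine'' fluctuation whose restrictions to disjoint scale-$2^n$ subboxes of $\mcl B_{n+k}$ are approximately independent, each distributed like a zero-boundary DGFF on a box of side $\sim 2^n$. Tile the annulus $\mcl A_{n+k}$ by $\Theta(2^{2k})$ subboxes of side $2^n$ and group them into $\Theta(2^k)$ concentric ``rings'' of sub-annuli separating the inner and outer boundaries of $\mcl A_{n+k}$. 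A topological argument (e.g.\ a direct duality for $\BB Z^2$) shows that any path across $\mcl A_{n+k}$ must contain an induced crossing of at least one sub-annulus in each ring.

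\paragraph{Amplification via independence.} Condition on $\phi$. Inside a single sub-annulus of side $2^n$, the conditional law of the fine-scale field is close to that of a DGFF at scale $n$, shifted by $\phi(\text{center})$; the corresponding crossing distance is therefore at least $e^{\xi \phi(\text{center}) + L_n(q)}$ with conditional probability $\geq q - o(1)$. Summing along any forced ring crossing gives a lower bound on $D_{n+k}(\text{across}\,\mcl A_{n+k})$ of the form
\[
\sum_{\text{forced sub-annuli}} e^{\xi \phi(\text{center}) + L_n(q)}.
\]
By independence across sub-annuli and a Chernoff-type estimate, a definite fraction of sub-annuli in each ring realize this bound with very high conditional probability, so (after taking $q'$ slightly smaller than $q$ via union/concentration) the distance is at least the minimum over rings of the average of $e^{\xi \phi(\text{center})+L_n(q)}$ over the ring, times the number of forced sub-annuli per ring.

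\paragraph{Main obstacle.} The hardest point is the shortcut effect: $\phi$ has regions where it is very negative on macroscopic sets, and these provide cheap passages that compete with the naive lower bound $e^{L_n(q)}$. To get a genuine gain $\alpha > 0$, I would use Borell--TIS on $\phi$ (whose variance grows like $\log 2^k$) together with a KPZ-style balance between the Gaussian density of exceptional ``low'' values of $\phi$ and their multiplicative weight $e^{\xi \phi}$: the minimum of $\xi \phi$ over a ring of $\Theta(2^k)$ centers is roughly $-\xi c\sqrt{k \log 2}$, and concentration shows only a small fraction of sub-annuli can be that negative. Quantifying the net gain per scale requires estimating, for each ring, the \emph{sum} $\sum e^{\xi\phi}$, not the minimum, and showing that with high probability this sum exceeds a constant multiple of $2^k e^{\xi \BB E \phi}=2^k$. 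The exponent $c_0 e^{-c_1\xi}$ reflects the shrinking margin in this balance as $\xi$ grows: larger $\xi$ magnifies the appeal of shortcuts, so more aggressive concentration (and hence smaller amplification per step) is all one can hope to extract. I expect a careful FKG/Gaussian-concentration step, combined with the forced-ring topology, to be the technical heart of the proof.
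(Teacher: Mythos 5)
Your overall skeleton (a priori bound at a fixed scale, followed by an additive amplification of the log-quantile via a coarse/fine decomposition, with $k = k(\xi)$ chosen so the gain per step dominates the Gaussian cost) matches the paper's broad structure. But the specific mechanism you propose for controlling the ``shortcut effect'' does not work, and it is precisely at that point that the paper introduces its key new idea, which your proposal lacks.

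There are two concrete problems. First, your estimate for $\min \phi$ over a ring of $\Theta(2^k)$ box centers is off by a factor of $\sqrt{k}$. Since $\phi$ has pointwise variance of order $k\log 2$ (not $O(1)$), and the minimum of a log-correlated Gaussian over $N$ points scales like $-\mathrm{(const)}\cdot\log N$ (not like $-\sqrt{\mathrm{Var}}$), the ring minimum is of order $-c k$, not $-c\sqrt{k\log 2}$. Consequently $e^{\xi\min\phi}\sim e^{-c\xi k}$ genuinely competes with the $\sim 2^k$ rings, and the naive lower bound $\sum_{\mathrm{rings}}\min_{j\in\mathrm{ring}} e^{\xi\phi_j}$ goes to zero once $\xi$ exceeds a threshold of order $1$; that is exactly the barrier $\xi<1/\sqrt 2$ from the earlier literature. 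Your $\sqrt k$ makes the problem look easier than it is. Second, estimating the \emph{sum} $\sum_j e^{\xi\phi_j}$ over a ring does not help: a crossing path passes through only $O(1)$ sub-boxes per ring, and the cost of that ring is governed by the \emph{minimum} sub-annulus crossing cost within the ring, not by the sum or the average. Concentration of $\sum_j e^{\xi\phi_j}$ around $2^k$ is compatible with the minimum being arbitrarily small, so the ``KPZ-style balance'' you describe does not give a lower bound on the distance.

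The missing idea is exactly the paper's Proposition~\ref{prop-level-set-around}: via a Ray--Knight / metric-graph GFF isomorphism, one finds (with probability $\geq 1-e^{-cu^2}$) a closed contour $P$ around $\mcl A_n$ on which $h_n\geq -u$, with $u$ a universal constant independent of $n$. This replaces any estimate of $\min_j\phi_j$ over a macroscopic region, which would necessarily diverge. Every crossing of $\mcl A_n$ must hit $P$, and near each $z\in P$ one then works with the \emph{nested} annuli $\mcl A_{n,k}(z)$, $k\in[K/2,K-1]_{\BB Z}$, rather than with concentric fixed-scale rings. A Borell--TIS argument (Lemma~\ref{lem-harmonic-max}) shows that for most of those scales the harmonic extension $\frk h_{n,k}^z$ is $\geq -C$ with $C$ universal; at those scales the crossing contributes $e^{-\xi C}R$ to the distance, where $R$ is the $q$-quantile at the smaller scale. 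Summing the $\sim K/4$ good nested scales and choosing $K\asymp e^{\xi C}$ yields a multiplicative gain $\geq 4$ per step of $K$ scales, hence the exponent $\asymp e^{-c_1\xi}$. Without an analogue of the level-set percolation step, your argument would reduce to the fixed-scale estimate that only works for small $\xi$.
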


\bigskip 

\noindent\textbf{Acknowledgments.} We thank an anonymous referee for helpful comments on an earlier version of the paper. We thank Josh Pfeffer for helpful discussions. J.D.\ was partially supported by NSF grant DMS-1757479. E.G.\ was supported by a Clay research fellowship and a Trinity college, Cambridge junior research fellowship. The research of A.S was  supported by the ERC grant LiKo 676999 and is now supported by Grant ANID AFB170001 and FONDECYT iniciaci{\'o}n de investigaci{\'o}n N$^o$ 11200085.

\subsection{Background and significance}
\label{sec-context}

Let us now discuss the significance of Theorem~\ref{thm-lfpp-lower}. 
It is shown in~\cite[Lemma 2.11]{dg-supercritical-lfpp} (via a subadditivity argument) that for each $\xi  >0$, there exists an exponent $Q = Q(\xi) \in\BB R$ such that for each $\delta > 0$, 
\eqb \label{eqn-lfpp-exponent}
\lim_{n\rta\infty} \BB P\left[ 2^{n(\xi Q - \delta)} \leq  D_n \left(\text{across $\mcl A_n $} \right) \leq 2^{n(\xi Q + \delta)} \right] = 1 .
\eqe
We remark that the arguments of~\cite[Section 4.2]{dg-supercritical-lfpp} show that~\eqref{eqn-lfpp-exponent} also extends to the case when we replace $2^n$ by any positive integer in the definition of $D_n$ (so we can work with a discrete GFF on $[-N,N]^2 \cap \BB Z^2$ when $N$ is not necessarily a power of 2).

Once~\eqref{eqn-lfpp-exponent} is established, Theorem~\ref{thm-lfpp-lower} (applied with, e.g., $q=1/2$) implies that $Q > 0$; in fact, there are universal $c_0,c_1  > 0$ (namely, the constants from Theorem~\ref{thm-lfpp-lower} with $q =1/2$) such that $Q \geq c_0 \xi^{-1} e^{-c_1\xi}$ for each $\xi > 0$. The results of~\cite{gp-lfpp-bounds} imply that $Q\geq 0$ for all $\xi > 0$~\cite[Lemma 1.1]{gp-lfpp-bounds},  $Q > 0$ for $\xi < 1/\sqrt 2$~\cite[Theorem 2.3]{gp-lfpp-bounds}, $Q$ is a non-increasing function of $\xi$, and $\lim_{\xi\rta \infty} Q(\xi) =0$~\cite[Lemma 4.1]{gp-lfpp-bounds} (c.f.~\cite[Proposition 1.1]{dg-supercritical-lfpp}). The new contribution of Theorem~\ref{thm-lfpp-lower} is the fact that $Q > 0$ for all $\xi  > 0$, not just $\xi < 1/\sqrt 2$. 

The fact that $Q > 0$ is of significant practical and theoretical importance in the study of LFPP. 
On the practical side, it is shown in~\cite{dg-supercritical-lfpp} that a variant of LFPP defined using a mollification of the continuum Gaussian free field admits non-trivial subsequential scaling limits for each $\xi > 0$. A key input in the proof is the fact that $Q> 0$, which comes from Theorem~\ref{thm-lfpp-lower}. 

On the theoretical side, LFPP with parameter $\xi$ is related to \emph{Liouville quantum gravity} (LQG) with matter central charge $\ccM = 25 - 6 Q(\xi)^2$. 
Liouville quantum gravity is a one-parameter family of models of random fractal surfaces related to the continuum Gaussian free field. Most mathematical works on LQG concern the \emph{subcritical phase}, when $\ccM \in (-\infty,1)$ (often these works use the parameter $\gamma$ instead of $\ccM$, which is related to $\ccM$ by $\ccM = 25-6(2/\gamma+\gamma/2)^2$). We refer to~\cite{shef-kpz,dkrv-lqg-sphere} and the expository articles~\cite{berestycki-lqg-notes,gwynne-ams-survey} for an introduction to LQG in the subcritical phase.
Recently, there have been a few works investigating the \emph{supercritical phase} of LQG when $\ccM \in (1,25)$~\cite{ghpr-central-charge,gp-lfpp-bounds,dg-supercritical-lfpp,apps-central-charge,pfeffer-supercritical-lqg,dg-confluence,dg-uniqueness}. The key difference between the two phases is that LQG surfaces are topological surfaces when $\ccM \in (-\infty,1]$ (although they have a fractal metric space structure) but not when $\ccM \in (1,25)$ (since in this phase they have infinite ``spikes"). 

The scaling limit of (a continuum version of) LFPP is the metric associated with an LQG surface for $\ccM =25-6Q(\xi)^2$. 
This fact was first established in the subcritical case, when $\ccM < 1$ or equivalently $\gamma\in (0,2)$ or $Q > 2$. It was shown in~\cite{dg-lqg-dim} that for $\gamma \in (0,2)$, we have $Q(\xi) = 2/\gamma+\gamma/2$ if and only if $\xi = \gamma/d_\gamma$, where $d_\gamma$ is the Hausdorff dimension of an LQG surface viewed as a metric space. 
It was subsequently shown in~\cite{dddf-lfpp,gm-uniqueness} that for this value of $\xi$, the continuum version of LFPP converges in the scaling limit to a metric associated with $\gamma$-LQG. 

Subsequently to this paper, the convergence of continuum LFPP was extended to the critical and supercritical cases, when $\ccM \in [1,25)$ or equivalently $Q\in (0,2]$ or $\gamma \in \BB C$ with $|\gamma| =2$. More precisely, it is shown in~\cite{dg-supercritical-lfpp,pfeffer-supercritical-lqg,dg-uniqueness} (building on the results of this paper) that the following is true. If $\xi > 0$ is such that $\ccM  =25-6Q(\xi)^2$, then the continuum version of LFPP converges to a random metric on $\BB C$ associated with LQG with matter central charge $\ccM$. The fact that $Q> 0 $ for all $\xi  > 0$ shows that \emph{every} value of $\xi > 0$ corresponds to LQG with some central charge in $(-\infty,25)$. There is no degenerate range of $\xi$-values for which $Q = 0$ and LFPP is not connected to LQG.

Another interesting consequence of Theorem~\ref{thm-lfpp-lower} is related to conjectures for the formula relating $\xi$ and $Q(\xi)$. The value of $Q(\xi)$ is not known explicitly except in the special case\footnote{
LFPP with $\xi =1/\sqrt 6$ corresponds to Liouville quantum gravity with parameter $\gamma=\sqrt{8/3}$ (equivalently, matter central charge $\ccM = 0$) and the fact that $Q(1/\sqrt 6) = 5/\sqrt 6$ is a consequence of the fact that $\sqrt{8/3}$-LQG has Hausdorff dimension 4. See~\cite{dg-lqg-dim} for details.
} 
when $\xi = 1/\sqrt 6$, in which case $Q(\xi) = 5/\sqrt 6$. In~\cite[Section 1.3]{dg-lqg-dim}, the authors propose the possible relation $\xi Q(\xi) = 1 - \xi/\sqrt 6$ in the phase when $Q(\xi) > 2$, which is equivalent to $d_\gamma = 2+\gamma^2/2 + \gamma/\sqrt 6$ for $\gamma \in (0,2)$. This guess is extended by analytic continuation in~\cite{gp-lfpp-bounds} to $\xi Q(\xi) = \min\{1 - \xi/\sqrt 6 , 0\}$ for all $\xi > 0$. Theorem~\ref{thm-lfpp-lower} rules out this guess, since the guess would imply that $Q(\xi) =0$ for $\xi \geq \sqrt 6$.

\subsection{Outline of the proof}
\label{sec-outline}

The first step of the proof of Theorem~\ref{thm-lfpp-lower}, which is carried out in Section~\ref{sec-level-set}, is to show that with probability tending to 1 as $u\rta\infty$, uniformly in $n$, there is a path $P$ in the annulus $\mcl A_n$ which disconnects the inner and outer boundaries of $\mcl A_n$ such that $h_n \geq -u$ on $P$ (Proposition~\ref{prop-level-set-around}). 
To prove this, we use an isomorphism theorem to reduce the problem to showing the existence of a certain Brownian excursion which disconnects the inner and outer boundaries of $\mcl A_n$. The isomorphism theorem we use is the version of the generalized second Ray-Knight theorem for the metric graph GFF from~\cite{lupu-coupling,als-isomorphism}.

The rest of the proof is given in Section~\ref{sec-main-proof}.  
Here, we give a brief idea of the main ideas and refer to Section~\ref{sec-proof-setup} for a detailed outline. 
Let $K\in\BB N$ be a large integer to be chosen later, depending on $\xi$. 
We first show that if $P$ is a path around $\mcl A_n$ as above, with $u$ equal to a large enough universal constant, then with high probability the following is true for every $z\in P$. Most of the annuli $z + \mcl A_{n-k}$ for $k\in [K/2,K-1] \cap \BB Z $ are ``good" in the following sense. If we define the harmonic extension of the values of $h_n$ on the boundary of $z + \mcl B_{n-k}$ to be the unique discrete harmonic function on $z + \mcl B_{n-k}$ which agrees with $h_n$ on the boundary of $z+\mcl B_{n-k}$, then this harmonic extension is bounded below by a negative universal constant $-C$ on $z + \mcl A_{n-k}$. See Lemma~\ref{lem-gff-good-path} for a precise statement and Figure~\ref{fig-outline} for an illustration.  

By the Markov property of the discrete GFF, $h_n|_{z + \mcl B_{n-k}}$ minus the harmonic extension of its values on the boundary of $z + \mcl B_{n-k}$ is a zero-boundary GFF on $\mcl B_{n-k}$, which is independent from the harmonic extension. 
Therefore, for each of the ``good" values of $k\in [K/2,K-1]_{\BB Z}$ in the preceding paragraph, we have
\eqb \label{eqn-outline-across}
D_n\left(\text{across $z+\mcl A_{n-k}$} \right) 
\geq e^{-\xi C} \times \left(\text{random variable with the law of $D_{n-k}(\text{across $\mcl A_{n-k}$})$} \right) .
\eqe 

Any path between the inner and outer boundaries of $\mcl A_n$ must hit some $z\in P$, so must cross between the inner and outer boundaries of $z + \mcl A_{n-k}$ for each $k \in [K/2,K-1] \cap \BB Z $. From this fact and~\eqref{eqn-outline-across}, we arrive at a recursive lower bound for $D_n\left(\text{across $\mcl A_n$}\right)$ in terms of random variables with the law of $D_{n-k}\left(\text{across $\mcl A_{n-k}$}\right)$ for $k\in [K/2,K-1] \cap \BB Z $ (see Lemma~\ref{lem-induct}). Applying this bound inductively leads to Theorem~\ref{thm-lfpp-lower}.

\begin{comment}

Any path between the inner and outer boundaries of $\mcl A_n$ must hit some $z\in P$, so must cross between the inner and outer boundaries of $z + \mcl A_{n-k}$ for each $k \in [K/2,K-1] \cap \BB Z $.  
The bound described in the preceding paragraph therefore tells us that for $n \geq K$, $D_n (\text{across $\mcl A_n$})$ is typically bounded below by $c_0 e^{-c_1 \xi } K$ for universal constants $c_0,c_1>0$. In order to upgrade this to an exponential (in $n$) lower bound for $D_n(\text{across $\mcl A_n$})$, we will use an inductive argument.

By the Markov property of the discrete GFF, $h_n|_{z + \mcl B_{n-k}}$ minus the harmonic extension of its values on the boundary of $z + \mcl B_{n-k}$ is a zero-boundary GFF on $\mcl B_{n-k}$. Therefore, for each of the above ``good" values of $k \in [K/2,K-1] \cap \BB Z $, the law of the $D_n$-distance across $z + \mcl A_{n-k}$ can be bounded below in terms of the law of $D_{n-k}(\text{across $\mcl A_{n-k}$})$.  We therefore arrive at a recursive lower bound for $D_n\left(\text{across $\mcl A_n$}\right)$ in terms of random variables with the law of $D_{n-k}\left(\text{across $\mcl A_{n-k}$}\right)$ for $k\in [K/2,K-1] \cap \BB Z $ (see Lemma~\ref{lem-induct}). Applying this bound inductively, with the estimate of the preceding paragraph as the base case, leads to Theorem~\ref{thm-lfpp-lower}.

\end{comment}

\begin{figure}[h!]
	\centering
	\includegraphics[width=0.3\textwidth]{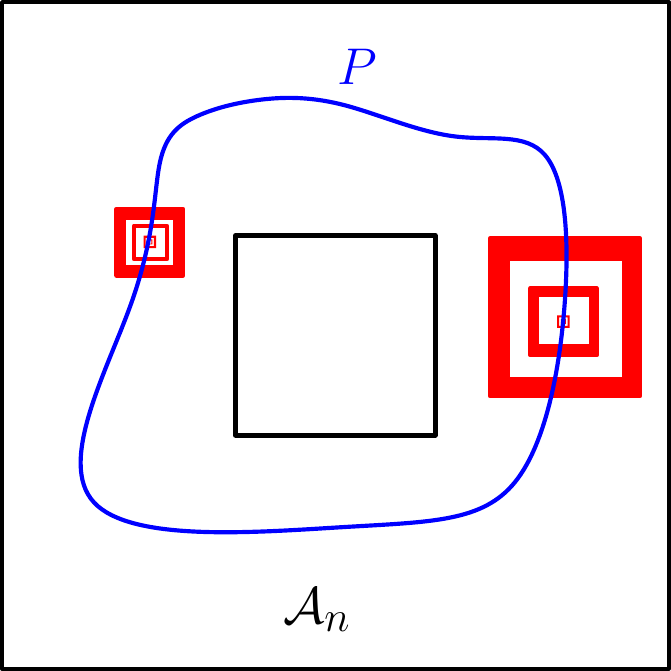}
	\caption{\label{fig-outline} Graphical idea of the proof. The blue curve represents a contour $P$ where the GFF is bigger than $-u$. The red square annuli surrounding points in $P$ are the ``good" annuli (such annuli exist for every $z\in P$). Any path between the inner and outer boundaries of $\mcl A_n$ has to cross logarithmically many of the red annuli.}
\end{figure}

\subsection{Notational conventions}
\label{sec-notation}

\noindent
We write $\BB N = \{1,2,3,\dots\}$ and $\BB N_0 = \BB N \cup \{0\}$.
\medskip

\noindent
For $a < b$, we define the discrete interval $[a,b]_{\BB Z}:= [a,b]\cap\BB Z$.
\medskip

\noindent
If $f  :(0,\infty) \rta \BB R$ and $g : (0,\infty) \rta (0,\infty)$, we say that $f(\ep) = O_\ep(g(\ep))$ (resp.\ $f(\ep) = o_\ep(g(\ep))$) as $\ep\rta 0$ if $f(\ep)/g(\ep)$ remains bounded (resp.\ tends to zero) as $\ep\rta 0$. We similarly define $O(\cdot)$ and $o(\cdot)$ errors as a parameter goes to infinity.
\medskip

\noindent
If $f,g : (0,\infty) \rta [0,\infty)$, we say that $f(\ep) \preceq g(\ep)$ if there is a constant $C>0$ (independent from $\ep$ and possibly from other parameters of interest) such that $f(\ep) \leq  C g(\ep)$. We write $f(\ep) \asymp g(\ep)$ if $f(\ep) \preceq g(\ep)$ and $g(\ep) \preceq f(\ep)$.
\medskip
 
\noindent
We will often specify any requirements on the dependencies on rates of convergence in $O(\cdot)$ and $o(\cdot)$ errors, implicit constants in $\preceq$, etc., in the statements of lemmas/propositions/theorems, in which case we implicitly require that errors, implicit constants, etc., appearing in the proof satisfy the same dependencies.

\section{Level set percolation for the GFF}
\label{sec-level-set}

Let $\mcl A_n$ be as in Section~\ref{sec-main-result}. We define a path \emph{around $\mcl A_n$} to be a nearest-neighbor path in $\BB Z^2$ which disconnects the inner and outer boundaries of $\mcl A_n$. We similarly define a path \emph{across $\mcl A_n$} to be a nearest-neighbor path in $\BB Z^2$ between the inner and outer boundaries of $\mcl A_n$. 

In this section, we give the first lower bound for the LFPP distance between the two boundaries of a (topological) annulus. In particular, we will show that uniformly in $n$ the probability that any path across $\mcl A_n$ hits at least one point where $h_n>-u$ goes to $1$ as $u\to  \infty$. To do this, we will study the probability that there is a path around $\mcl A_n$ where $h_n>-u$.

\begin{prop} \label{prop-level-set-around}
Let $h_n$ be a 0-boundary GFF on $\mcl B_{n}$ as in \eqref{eqn-Green}. There is a universal constant $c>0$ such that for each $n\in\BB N$ and each $u>0$, 
\begin{equation*}
\BB P\left[\text{there is a path $P$ around $\mcl A_n$ such that $h_n \geq - u$ on $P$} \right] \geq 1-e^{-c u^2} .
\end{equation*}  
\end{prop}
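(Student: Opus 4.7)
I plan to apply the metric-graph version of the second Ray--Knight isomorphism theorem of Lupu~\cite{lupu-coupling} (and its extension in~\cite{als-isomorphism}) to reduce the problem to a winding estimate for Brownian excursions from $\partial \mcl B_n$. First, extend $h_n$ to the metric graph $\wt{\mcl B}_n$, obtained by replacing each edge of $\mcl B_n$ by a unit interval, producing the metric-graph GFF $\wt h_n$: a continuous random field on $\wt{\mcl B}_n$ which agrees with $h_n$ at lattice vertices and is conditionally an independent Brownian bridge on each edge. Any continuous path in $\{\wt h_n > -u\} \subset \wt{\mcl B}_n$ which separates the inner and outer boundaries of $\mcl A_n$ visits a sequence of lattice vertices forming a nearest-neighbor lattice path with the same separation property and on which $h_n \geq -u$, so it suffices to exhibit such a separating metric-graph path with probability $\geq 1 - e^{-cu^2}$.

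Interpret $\wt h_n + u$ as a metric-graph GFF on $\mcl B_n$ with boundary value $u$. The Lupu--ALS isomorphism provides a coupling of $\wt h_n$ with a Poissonian collection $\mcl E$ of Brownian excursions from $\partial \mcl B_n$ into $\wt{\mcl B}_n$, whose intensity is proportional to $u^2$ times the It\^o excursion measure from $\partial \mcl B_n$, with the property that the connected component of $\partial \mcl B_n$ in $\{\wt h_n + u > 0\}$ equals $\partial \mcl B_n \cup \ol{\bigcup_{e \in \mcl E} \op{trace}(e)}$. Consequently, it suffices to show that with probability $\geq 1 - e^{-cu^2}$, some excursion $e \in \mcl E$ together with an arc of $\partial \mcl B_n$ joining its endpoints forms a topological loop which surrounds the inner boundary of $\mcl A_n$.

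The heart of the proof is then the following winding estimate: the It\^o excursion measure of the event $F$ that an excursion from $\partial \mcl B_n$, concatenated with a boundary arc joining its two endpoints, surrounds the inner boundary of $\mcl A_n$ is bounded below by a universal constant $\lambda > 0$, independent of $n$. Granting this, the number of excursions in $\mcl E$ satisfying $F$ is Poisson with mean $\geq c' \lambda u^2$ for a universal $c' > 0$, so the probability that no such excursion occurs is at most $\exp(-c' \lambda u^2)$, completing the proof. Crucially, this route works because the ``natural'' starting distribution is the boundary $\partial \mcl B_n$ rather than a single interior vertex; an interior starting point would only yield a winding intensity of order $1/n$, coming from the $2$d escape estimate, which falls short of the required $u^2$ exponent.

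The main obstacle is establishing the $n$-independent lower bound on the It\^o measure of $F$. Since $F$ depends only on macroscopic behavior of the excursion (at scale $\sim 2^n$), my strategy is to couple metric-graph Brownian motion with continuum planar Brownian motion at scale $\sim 2^n$ and to apply the conformally invariant fact that $2$d Brownian motion in $[-1,1]^2$ started on the boundary has a positive probability to encircle $[-1/2,1/2]^2$ before returning to the boundary; this probability is a universal constant because the three nested squares $\mcl B_{n-1} \subset \mcl B_{n-1/2} \subset \mcl B_n$ have conformal moduli bounded between universal constants independent of $n$.
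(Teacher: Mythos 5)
Your overall strategy matches the paper's: both proofs apply the Lupu--ALS metric-graph isomorphism to produce a Poisson point process of excursions from $\partial\mcl B_n$ with intensity $u^2 \nu_{\mathrm{exc}}$ on whose traces $h_n \geq -u$, and both then observe that once the excursion measure charges the relevant winding event with mass $\lambda > 0$ uniformly in $n$, the Poisson structure converts this into the probability bound $1 - e^{-\lambda u^2}$. Your remark that the starting distribution must be the full boundary $\partial\mcl B_n$ (rather than a single interior vertex) to get the $u^2$ exponent is also exactly the mechanism the paper exploits.

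However, there is a genuine gap in the reduction step. You claim it suffices that some excursion $e$, concatenated with an arc of $\partial\mcl B_n$ joining its endpoints, forms a loop $L$ surrounding the inner boundary $\partial\mcl B_{n-1}$. This does not imply that there is a path around $\mcl A_n$. A path around $\mcl A_n$ must disconnect $\partial\mcl B_{n-1}$ from the \emph{outer} boundary $\partial\mcl B_{n-1/2}$, which forces the separating set to actually pass through the annulus $\mcl A_n$. But an excursion that winds around once while staying in $\mcl B_n \setminus \mcl B_{n-1/2}$ and never enters $\mcl B_{n-1/2}$ produces a loop $L$ which encircles $\partial\mcl B_{n-1}$ (indeed, encircles all of $\mcl B_{n-1/2}$) while failing to separate $\partial\mcl B_{n-1}$ from $\partial\mcl B_{n-1/2}$: one can travel between the two boundaries of $\mcl A_n$ entirely within $\mcl B_{n-1/2}$ without meeting $L$. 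The correct reduction, which is what the paper proves, is to the event that the excursion has a \emph{subpath} lying in $\mcl A_n$ which itself winds around $\mcl A_n$. The winding estimate you sketch in your final paragraph (a Brownian excursion from $\partial[-1,1]^2$ has uniformly positive probability to trace a loop before returning to the boundary) does in fact yield this stronger event once you insist that the encircling loop occur within the sub-annulus $[-2^{-1/2},2^{-1/2}]^2 \setminus [-1/2,1/2]^2$; this is a routine strengthening given the Markov property and convergence of the conditioned random walk to Brownian motion. So the gap is a fixable imprecision in how the target event is stated, not a wrong approach, but as written the claimed ``Consequently, it suffices to show\ldots'' is false.
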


Proposition~\ref{prop-level-set-around} is one of several results in the literature concerning percolation for level sets of the GFF, see, e.g.,~\cite{aru-sepulveda-2valued,ding-li-chem-dist,dw-gff-crossing,dww-gff-crossing,lw-gff-crossing}.

To prove Proposition \ref{prop-level-set-around}, we are going to use a version of the so-called second generalized Ray-Knight theorem from~\cite{als-isomorphism}. As the result is not so easy to state, we will simplify it so that we only have to introduce the objects that are strictly necessary for our proof. The exposition of this result is based on Section 2.2 and 2.3 of \cite{als-isomorphism}. For further discussion of the second generalized Ray-Knight theorem, see \cite[Chapter 2]{Szn}. 

\begin{remark} \label{remark-tassion}
We expect that one can also give an alternative proof of Proposition~\ref{prop-level-set-around} using~\cite[Proposition 4]{ding-li-chem-dist} (which gives an analog of Proposition~\ref{prop-level-set-around} for paths \emph{across} $\mcl A_n$ instead of paths \emph{around} $\mcl A_n$) together with an RSW argument in a similar spirit to the one of~\cite{tassion-rsw}. However, we think that the proof we provide here is much shorter and more direct than what this alternative proof would be. 
\end{remark}

For $x,y\in \mcl B_n$, we define $\Gamma^{\B_n,x,y}$ to be the set of nearest-neighbor paths going from $x$ to $y$ such that all the steps (except maybe the first and the last) remain in the interior of $\mcl B_n$. For each $k\in\BB N$, we define the non-probability measure $\mu_k^{x,y}$ as the measure that assigns mass $4^{-k}$ to each path in $\Gamma^{\B_n,x,y}$ with length equal to $k$. We also define
\[\nu_{\op{exc}}=\frac{1}{2}\sum_{k =0}^\infty \sum_{x,y \in \partial \B_n} \mu_k^{x,y}, \]
that is to say the measure that gives mass $4^{-n}$ to each path $P$ of length $n$ that connects points in $\partial \mcl B_n$.

Let $\Xi^u$ be a Poisson point process of intensity $u^2 \nu_{\op{exc}}$. We state now a simplified version of \cite[Proposition 2.4]{als-isomorphism}. This proposition is an improvement of the second generalized Ray-Knight theorem and is proven using the techniques of \cite{lupu-coupling}.

\begin{thm}\label{thm-Iso}
	There exists a coupling between $ {\Xi}^u$ and $h_n$ such that $h_n \geq -u$ on the union of the paths in $\Xi^u$.
\end{thm}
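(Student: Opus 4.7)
The plan is to obtain Theorem~\ref{thm-Iso} as a direct specialization of the metric graph isomorphism from~\cite{als-isomorphism} (building on~\cite{lupu-coupling}), which is explicitly cited as the source of the statement. The core idea is to pass from the vertex GFF $h_n$ to the associated \emph{metric graph GFF} $\widetilde h_n$ (obtained by interpolating $h_n$ along each edge of $\mcl B_n$ by an independent Brownian bridge of appropriate variance, conditioned on the endpoint values). One then invokes a sign-coupling between $\widetilde h_n + u$ and a Poisson point process of Brownian excursions from $\partial \mcl B_n$, which at the level of vertices reduces to the desired coupling with the simple random walk excursions at intensity $u^2 \nu_{\op{exc}}$.

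Concretely, I would first introduce $\widetilde h_n$ and note that it is a continuous centered Gaussian field on the metric graph, vanishing on $\partial \mcl B_n$ and agreeing with $h_n$ at the vertices. Next, I would apply the version of the generalized second Ray-Knight theorem from~\cite{als-isomorphism}: there exists a coupling of $\widetilde h_n$ with a Poisson point process $\widetilde \Xi^u$ of Brownian excursions from $\partial \mcl B_n$ to itself, at intensity $u^2$ times the Brownian excursion measure, such that every excursion in $\widetilde \Xi^u$ is contained in a single connected component of $\{\widetilde h_n > -u\}$ on the metric graph. Finally, one extracts $\Xi^u$ from $\widetilde \Xi^u$ by recording, for each Brownian excursion, the ordered sequence of vertices it visits; since the vertex-skeleton of Brownian motion on the metric graph is a simple random walk on $\mcl B_n$, standard excursion-theoretic computations show that the resulting point process of nearest-neighbor paths from $\partial \mcl B_n$ to $\partial \mcl B_n$ has exactly the law of a Poisson point process of intensity $u^2 \nu_{\op{exc}}$, as desired.

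Since $\widetilde h_n > -u$ on the trace of every excursion in $\widetilde \Xi^u$ and $\widetilde h_n(z) = h_n(z)$ at every vertex $z$ (with the boundary values satisfying $h_n(z) = 0 > -u$), this automatically yields $h_n \geq -u$ on the vertex trace of every excursion in $\Xi^u$. The main obstacle is the construction of the coupling in the second step; this is genuinely nontrivial, relying on Lupu's sign-coupling of metric graph components together with It\^o excursion theory to match the excursion intensities. Fortunately this work is carried out in~\cite{lupu-coupling,als-isomorphism}, so the proof reduces to a careful translation of \cite[Proposition 2.4]{als-isomorphism} into the present notation and to checking that passing from the Brownian to the random-walk excursion measure preserves intensities, which is a standard computation.
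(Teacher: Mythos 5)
Your proposal is correct and follows essentially the same route as the paper: both reduce the statement to~\cite[Proposition 2.4]{als-isomorphism}, restrict the metric graph GFF to the vertices (where it agrees with the discrete GFF), and observe that the vertex-trace of the Poisson process of metric graph excursions is a Poisson process of nearest-neighbor excursions with intensity $u^2\nu_{\op{exc}}$. The paper's proof is a terse citation of that proposition; you spell out the same ingredients (Brownian-bridge interpolation, sign-coupling on excursion components, vertex-skeleton of metric graph Brownian motion being simple random walk) in slightly more detail, but there is no substantive difference in approach.
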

\begin{proof}
	This theorem follows by applying Proposition 2.4 of \cite{als-isomorphism} (which concerns a coupling of a GFF on the so-called \emph{metric graph} associated with $\mcl B_n$) and using the fact that the restriction of the metric graph GFF to the vertices of $\mcl B_n$ is a discrete GFF and that the restriction of a PPP of metric graph excursions is a PPP with intensity $\nu_{\text{exc}}$. 
	The fact that the $h_n\geq -u$ on each path in $\Xi^u$ follows from the third bullet point of that proposition.
\end{proof}
 
We can now prove Proposition \ref{prop-level-set-around}.

\begin{proof}[Proof of Proposition \ref{prop-level-set-around}]
	Thanks to Theorem \ref{thm-Iso}, we only need to show that the measure $\nu_{\op{exc}}$ gives positive mass (uniformly in $n$) to paths $P$ that have a subpath $\hat P\subseteq P$ which is a path around $\mcl A_n$. 
	
	To prove this, we first note that for each $\delta > 0$, the measure $\nu_{\op{exc}}$  restricted to loops of length longer than $\delta 2^{2n}$  converges weakly, under appropriate scaling, as $n\to \infty$ to a non-zero measure supported on paths in $[-1,1]^2$ that only intersect the boundary of $[-1,1]^2$ only at their starting and ending points; see~\cite[Lemma 4.6]{als-isomorphism} for a precise statement. The fact that this limiting measure is supported on paths inside $[-1,1]^2$ follows from the explicit definition of the limiting measure given in \cite{als-isomorphism} just before Proposition 3.7.

Because of the nature of the limiting measure, there exists $\epsilon>0$ such that for every $n\in\BB N$, $\nu_{\op{exc}}$  gives (uniformly in $n$) positive mass to paths that get to distance at least $\epsilon 2^n$ from $\partial \mcl B_n$. 

Consider now the restriction of $\nu_{\op{exc}}$ to paths which get to distance $\ep 2^n$ from $\partial \mcl B_n$, normalized to be a probability measure. If $P$ is sampled from this probability measure, then by the definition of $\nu_{\op{exc}}$, the law of $P$ is that of a simple random walk on $\mcl B_n$ started from a random point of $\partial \mcl B_n$, stopped at the first positive time when it hits $\partial \mcl B_n$, and conditioned to get to distance at least $\ep 2^n$ from $\partial \mcl B_n$ before this time. Let $\tau$ be the first time at which $P$ gets to distance $\ep 2^n$ from $\partial \mcl B_n$. If we condition on $P|_{[0,\tau]_{\BB Z}}$, then the conditional law of the rest of $P$ is that of a simple random walk on $\mcl B_n$ started from $P(\tau)$ and stopped upon hitting $\partial \mcl B_n$. By the convergence of simple random walk to Brownian motion, it follows that $P$ has uniformly positive probability to make a loop in $\mcl A_n$ which disconnects the inner and outer boundaries of $\mcl A_n$. 

Combining the two preceding paragraphs shows that $\nu_{\op{exc}}$ assigns positive mass to paths which make a loop in $\mcl A_n$ which disconnects the inner and outer boundaries of $\mcl A_n$, as required.  
\end{proof}

\begin{remark}
	The limit of the measure $\nu_{exc}$ is called excursion measure, and it is the Brownian analogue of $\nu_{exc}$. For the details of the topology of the convergence, see Section 4.1 of \cite{als-isomorphism}.
\end{remark}

\section{Proof of Theorem~\ref{thm-lfpp-lower}}
\label{sec-main-proof}

\subsection{Setup and outline}
\label{sec-proof-setup}
  
For $z   \in\BB Z^2$ and $n > 0$, we write
\eqb
\mcl B_n(z) :=   z + [-2^n ,2^n]_{\BB Z}^2 \quad \text{and} \quad \mcl B_n^\circ(z) := z + [-2^n - 1, 2^n-1]_{\BB Z}^2
\eqe
for the discrete squares of side length $2^{n+1}$ and $2^{n+1}-2$, respectively, centered at $z$. In the notation of Section~\ref{sec-main-result}, we have $\mcl B_n = \mcl B_n(0)$. 

As in Theorem~\ref{thm-lfpp-lower}, let $h_n$ be a discrete GFF on the square $\mcl B_n $. 
For $k \in [0, n-1]_{\BB Z}$, $z\in \mcl B_n$ such that $\mcl B_{n-k}(z) \subset \mcl B_n$, and $u\in \mcl B_{n-k}(z)$, we define 
\eqb \label{eqn-harmonic-part}
\frk h_{n,k}^z(u) := \BB E\left[ h_n(u) \,|\, h_n|_{\mcl B_n \setminus \mcl B_{n-  k}^\circ(z) } \right]   
\eqe
and 
\eqb \label{eqn-zero-bdy-part}
\rng h_{n,k}^z(u) := h_n(u) -  \frk h_{n,k}^z (u) .
\eqe
Then $\frk h_{n,k}^z$ is discrete harmonic on $\mcl B_{n-k}^\circ(z)$, $\rng h_{n,k}^z$ is a zero-boundary discrete GFF on $\mcl B_{n-k}(z)$, $\frk h_{n,k}^z$ is determined by $h_n|_{\mcl B_n \setminus \mcl B_{n-k}^\circ(z)}$, and $\rng h_{n,k}^z$ is independent from $h_n|_{\mcl B_n\setminus \mcl B_{n-k}^\circ(z)}$. 

We define $\rng D_{n,k}^z$ to be the LFPP metric associated with $\rng h_{n,k}^z$, i.e., the metric on $\mcl B_{n-k}(z)$ which is defined as in~\eqref{eqn-lfpp-def} with $\rng h_{n,k}^z$ in place of $h_n$. 
  
We also define the discrete square annulus
\eqb \label{eqn-annulus-def}
\mcl A_{n,k}(z) := \mcl B_{n- k - 1/2}(z) \setminus \mcl B_{n- k - 1}(z)   .
\eqe 
In the notation of Theorem~\ref{thm-lfpp-lower}, we have $\mcl A_n = \mcl A_{n,0}(0)$ and $z + \mcl A_{n-k} = \mcl A_{n,k}(z)$. As in the discussion just above Theorem~\ref{thm-lfpp-lower}, we define $D_n\left(\text{across $\mcl A_{n,k}(z)$}\right)$ to be the minimum $D_n$-length of a nearest-neighbor path in $\BB Z^2$ between the inner and outer boundaries of $\mcl A_{n,k}(z)$. 
We similarly define $\rng D_{n,k}^z\left(\text{around $A_{n,k}(z)$}\right)$. 

The strategy of the proof of Theorem~\ref{thm-lfpp-lower} is to use Proposition~\ref{prop-level-set-around} to show that any path across $\mcl A_n$ has to cross many sets at different scales on which the values of $h_n$ are bounded below. More precisely, let $C>1$ be a large universal constant and let $K \in \BB N$ be a large constant, to be chosen later in a manner depending only on $\xi$. Say that a point $z\in \mcl B_n$ is \emph{good} if there are at least $3 K / 8$ values of $k\in [K/2 , K-1]_{\BB Z}$ for which $\min_{u \in \mcl A_{n,k}(z) } \frk h_{n,k}^z(u) \geq - C$. In other words, $z$ is good if the harmonic part of the field is bounded below at ``most" scales. 

The goal of Section~\ref{sec-good-path} is to show that with high probability, every path across $\mcl A_n$ hits a square of the form $\mcl B_{n-K}(z)$ for some good $z$ (Lemma~\ref{lem-gff-good-path}). 
To do this, we first observe that if $z$ is not good, then there are at least $K/4$ ``bad" scales where $\min_{u \in \mcl A_{n,k}(z) } \frk h_{n,k}^z(u) \leq - C$. Using Proposition~\ref{prop-level-set-around} and a comparison between the maximum and minimum values of $\frk h_{n,k}^z$ on $\mcl A_{n,k}(z)$ (Lemma~\ref{lem-harmonic-max}) we will show that there is a constant $C' > C$ such that the following is true. For each of these bad scales, there is a positive chance that there is a path $P_k$ around $\mcl A_{n,k}(z)$ such that $h_n < - C'$ on $P_k$. Using the independence between the field at different scales (Corollary~\ref{cor-annulus-ind}), we can show that for a bad $z$ it holds with very high probability (high enough to take a union bound over all $z\in 2^{n-k-1} \BB Z^2$) that such a path $P_k$ exists for at least one of the $K/4$ bad scales.
 
By Proposition~\ref{prop-level-set-around}, with high probability there is a path $P$ around $\mcl A_n$ on which $h_n \geq -C'$.
The path $P$ cannot hit $\mcl B_{n-K}(z)$ for any bad point $z$, since otherwise it would have to cross one of the paths $P_k$ on which $h_n  < -C'$ (Lemma~\ref{lem-gff-ind-min}). This implies that $P$ has to be covered by squares of the form $\mcl B_{n-K}(z)$ for good points $z$. Since any path across $\mcl A_n$ has to cross $P$, this shows that any path across $\mcl A_n$ has to hit $\mcl B_{n-K}(z)$ for some good point $z$, as required.

In Section~\ref{sec-induct}, we will conclude the proof of Theorem~\ref{thm-lfpp-lower} by applying the result of Section~\ref{sec-good-path} at multiple scales via an inductive argument. Suppose that $n\in\BB N$, $R>0$, and we have shown that for all $k\in [K/2,K-1]_{\BB Z}$, it holds with high probability that $D_{n-k}\left(\text{across $\mcl A_n$}\right) \geq R$. Since $\rng h_{n,k}^z$ has the same law as $h_{n-k}$ up to a spatial translation, this show that for each $z\in\mcl A_n$, it holds with high probability that (in the notation defined just above) we have $\rng D_{n,k}^z\left(\text{across $\mcl A_{n,k}(z)$}\right) \geq R$. By using independence across scales (Corollary~\ref{cor-annulus-ind} again), we get that for each $z\in\mcl A_n$, it holds with high probability (high enough for a union bound) that there are at least $3K/8$ scales $k \in [K/2,K-1]_{\BB Z}$ for which $\rng D_{n,k}^z\left(\text{across $\mcl A_{n,k}(z)$}\right) \geq R$. 

If $z$ is good in the sense described above, then by the preceding paragraph there are at least $K/2 -K/8 - K/8 = K/4$ scales $k\in [K/2,K-1]_{\BB Z}$ which are ``very good" in the sense that $\rng D_{n,k}^z\left(\text{across $\mcl A_{n,k}(z)$}\right) \geq R$ and $\min_{u \in \mcl A_{n,k}(z) } \frk h_{n,k}^z(u) \geq - C$. 
For each very good scale $k$, the $D_n$-distance across $\mcl A_{n,k}(z)$ is at least $e^{-\xi C} R$. Since any path across $\mcl A_n$ has to hit $\mcl B_{n-K}(z)$ for some good $z$, each such path has to cross at least $K/4$ of these very good scales. This shows that with high probability, $D_n\left(\text{across $\mcl A_n$} \right) \geq \frac14 K e^{-\xi C} R$ (Lemma~\ref{lem-induct}). Making an appropriate choice of $K$ and $R$ and iterating this estimate gives Theorem~\ref{thm-lfpp-lower}.

\subsection{Existence of squares where the field is of constant order at many scales}
\label{sec-good-path}

The goal of this subsection is to prove the following lemma; see Section~\ref{sec-proof-setup} for an outline of the proof of the lemma and an explanation of its role in the proof of Theorem~\ref{thm-lfpp-lower}.

\begin{lem} \label{lem-gff-good-path}
Fix $\delta > 0$. There exists $C   > 0$ and $K_* \in \BB N$ depending only on $\delta$ such that for each $n,K\in\BB N$ with $K_* \leq K \leq n-1$, it holds with probability at least $1- \delta$ that the following is true. Each path across $\mcl A_n$ hits a square of the form $\mcl B_{n-K}(z)$ for some $z\in (2^{n-K-1} \BB Z^2) \cap \mcl B_n$ with the following property: there are at least $(1/2 - \delta) K$ values of $k\in [K/2 , K-1]_{\BB Z}$ for which $\min_{u \in \mcl A_{n,k}(z) } \frk h_{n,k}^z(u) \geq - C$.  
\end{lem}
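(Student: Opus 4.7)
Fix $\delta > 0$. The plan is a two-scale sandwich argument: at the coarsest scale, use Proposition~\ref{prop-level-set-around} to produce a path $P$ around $\mcl A_n$ on which $h_n$ is not too negative; at each of many intermediate scales, use Proposition~\ref{prop-level-set-around} (applied to appropriate localized zero-boundary fields) to produce ``blocking'' loops $P_k$ around the annuli $\mcl A_{n,k}(z)$ on which $h_n$ is very negative. The blocking loops will force $P$ to avoid $\mcl B_{n-K}(z)$ for every bad $z$, so $P$ must lie in the union of the good squares; since any path across $\mcl A_n$ must cross $P$ topologically, each such path hits a good square.

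First I would fix, in order of dependence, constants $u_0, C', C > 0$ depending only on $\delta$: take $C'$ with $e^{-c(C')^2} \leq \delta/2$ (so Proposition~\ref{prop-level-set-around} at the coarsest scale succeeds with the desired probability), then $u_0$ large enough that $c u_0^2 \delta > 3 \log 2$ (for the scale-by-scale union bound below), and then $C$ large enough compared to $u_0$, $C'$, and the comparison constant from Lemma~\ref{lem-harmonic-max}. Applying Proposition~\ref{prop-level-set-around} to $h_n$ with $u=C'$ produces, with probability at least $1-\delta/2$, a path $P$ around $\mcl A_n$ with $h_n \geq -C'$ on $P$.

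Next, a bad $z \in (2^{n-K-1}\BB Z^2) \cap \mcl B_n$ has at least $\delta K$ scales $k \in [K/2,K-1]_{\BB Z}$ satisfying $\min_{\mcl A_{n,k}(z)} \frk h_{n,k}^z \leq -C$. At each such bad scale, Lemma~\ref{lem-harmonic-max} converts this minimum bound into a uniform upper bound of the form $\max_{\mcl A_{n,k}(z)} \frk h_{n,k}^z \leq -C' - u_0$ (by the choice of $C$), and Proposition~\ref{prop-level-set-around} applied to $-\rng h_{n,k}^z$ (which has the same law as $\rng h_{n,k}^z$ by symmetry of the GFF) yields, with conditional probability at least $1-e^{-cu_0^2}$ given the data outside $\mcl B_{n-k}^\circ(z)$, a loop around $\mcl A_{n,k}(z)$ on which $\rng h_{n,k}^z \leq u_0$ and hence on which $h_n = \frk h_{n,k}^z + \rng h_{n,k}^z < -C'$. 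By the scale independence in Corollary~\ref{cor-annulus-ind}, these conditional events at distinct bad scales are independent, so the probability that a bad $z$ admits no blocking loop at any of its $\geq \delta K$ bad scales is at most $e^{-c u_0^2 \delta K}$. A union bound over the $O(2^{2K})$ candidate $z$'s then bounds the total failure probability by $\delta/2$ for all $K \geq K_*(\delta)$.

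On the intersection of the two favorable events (probability $\geq 1 - \delta$), the coarse loop $P$ cannot enter $\mcl B_{n-K}(z)$ for any bad $z$: since $P$ surrounds the much larger annulus $\mcl A_n$, upon entering $\mcl B_{n-K}(z) \subset \mcl B_{n-k-1}(z)$ it would have to exit $\mcl B_{n-k-1/2}(z)$, crossing the blocking loop $P_k$ at some vertex and forcing $h_n$ there to satisfy both $\geq -C'$ and $< -C'$. Hence $P$ lies in $\bigcup_{z\,\text{good}} \mcl B_{n-K}(z)$, and every path across $\mcl A_n$ must cross $P$ and so hit a good square, proving the lemma. The main obstacle will be arranging the constants so that the exponential decay $e^{-cu_0^2\delta K}$ beats the $2^{2K+2}$ union bound while $u_0, C', C$ depend only on $\delta$; a subsidiary obstacle is the discrete topological crossing statement in $\BB Z^2$ (that two dual paths separating nested annuli must share a vertex), which should be handled by standard $*$-path arguments.
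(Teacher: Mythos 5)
Your overall scheme — coarse loop with $h_n\ge -C'$ from Proposition~\ref{prop-level-set-around}, blocking loops at intermediate scales where $\rng h_{n,k}^z$ is small, Lemma~\ref{lem-harmonic-max} to relate min and max of $\frk h_{n,k}^z$, and Corollary~\ref{cor-annulus-ind} plus a union bound over $O(2^{2K})$ points $z$ — is exactly the route the paper takes (packaged there via the intermediate Lemma~\ref{lem-gff-ind-min}). But there is a genuine gap in how you invoke Lemma~\ref{lem-harmonic-max}.

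You write that ``at each such bad scale, Lemma~\ref{lem-harmonic-max} converts this minimum bound into a uniform upper bound of the form $\max_{\mcl A_{n,k}(z)}\frk h_{n,k}^z\le -C'-u_0$ (by the choice of $C$).'' That is not what the lemma gives: Lemma~\ref{lem-harmonic-max} controls the \emph{sum over $k$} of the oscillations $\max_{u,v\in\mcl A_{n,k}(z)}(\frk h_{n,k}^z(u)-\frk h_{n,k}^z(v))$, so with probability $1-c_02^{-3K}$ the average oscillation is $O(1)$, but any individual scale may still oscillate badly; no choice of the constant $C$ (which must depend only on $\delta$, not on $K$) can force the per-scale oscillation to be $\le C-C'-u_0$ at \emph{every} bad scale. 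Consequently the claim ``the probability that a bad $z$ admits no blocking loop at any of its $\ge\delta K$ bad scales is at most $e^{-cu_0^2\delta K}$'' is unjustified: it implicitly treats all $\ge\delta K$ bad scales as having $\max\frk h_{n,k}^z\le -C'-u_0$, whereas only those with small oscillation do. Moreover, since the ``blocking'' event as you define it mixes $\rng h_{n,k}^z$ and $\frk h_{n,k}^z$, it is not one of the independent events of Lemma~\ref{l.independence}/Corollary~\ref{cor-annulus-ind}; only the purely-$\rng h$ events are. The correct bookkeeping is a three-way count (this is what the paper's Lemma~\ref{lem-gff-ind-min} does): on a high-probability event, at most $\delta K/4$ scales in $[K/2,K-1]_{\BB Z}$ have oscillation $>C_2$ (Lemma~\ref{lem-harmonic-max} plus Markov), at most $\delta K/4$ scales lack the pure-$\rng h$ loop (Corollary~\ref{cor-annulus-ind}, these being the genuinely independent events), so among the $\ge\delta K$ scales with $\min\frk h_{n,k}^z\le -C$ there remain at least $\delta K/2\ge 1$ scales where all three hold simultaneously, and any one such scale yields a loop on which $h_n<-C'$. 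With this counting in place, your argument goes through and matches the paper's.
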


It is easier to lower-bound $\max_{u \in \mcl A_{n,k}(z) } \frk h_{n,k}^z(u) $ than it is to lower-bound $\min_{u \in \mcl A_{n,k}(z) } \frk h_{n,k}^z(u) $. 
The following lemma will allow us to convert between the max and the min.

\begin{lem} \label{lem-harmonic-max} 
There are universal constants $c_0,c_1> 0$ such that for each $n \in\BB N$, each $K \in [2,n-1]_{\BB Z}$, each $z\in \mcl B_{n-1/2}$, and each $C > 1$, 
\eqb \label{eqn-harmonic-max}
\BB P\left[\sum_{k=2}^K \max_{u,v\in \mcl A_{n,k}(z) } (\frk h_{n,k}^z(u) - \frk h_{n,k}^z(v))  > c_0 K + C \right] \leq c_0 e^{-c_1 C}  . 
\eqe
\end{lem}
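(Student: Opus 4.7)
Set $X_k := \max_{u,v \in \mcl A_{n,k}(z)}(\frk h_{n,k}^z(u) - \frk h_{n,k}^z(v))$, and observe that
\[
\sum_{k=2}^K X_k \;=\; \sup_{(u_k,v_k)_{k=2}^K} \sum_{k=2}^K (\frk h_{n,k}^z(u_k) - \frk h_{n,k}^z(v_k))
\]
is the supremum of a centered Gaussian process. The plan is to apply the Borell--TIS inequality, so I need bounds on the expected supremum and on the maximum variance $\sigma^2$ of the indexing family.

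For a pair $u,v \in \mcl A_{n,k}(z)$, the independent decomposition $h_n = \frk h_{n,k}^z + \rng h_{n,k}^z$ gives
\[
\op{Var}(\frk h_{n,k}^z(u)-\frk h_{n,k}^z(v)) = \op{Var}(h_n(u)-h_n(v)) - \op{Var}(\rng h_{n,k}^z(u)-\rng h_{n,k}^z(v)).
\]
The standard random-walk Green's function asymptotics yield $\op{Var}(h_n(u)-h_n(v)) = 2\log|u-v| + O(1)$ for $u,v$ at macroscopic distance from $\partial \mcl B_n$, and an analogous expansion holds for $\rng h_{n,k}^z$ on $\mcl B_{n-k}(z)$; the logarithmic singularities cancel. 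A more careful expansion using smoothness of $G_{\mcl B_n}-G_{\mcl B_{n-k}(z)}$ on $\mcl A_{n,k}(z)^2$ yields the H\"older-type estimate $\op{Var}(\frk h_{n,k}^z(u)-\frk h_{n,k}^z(v)) \leq C(|u-v|/2^{n-k})^2$ uniformly. Dudley's entropy bound then gives $\BB E[X_k]\leq c_0/4$ uniformly in $n,k,z$, so $\BB E[\sum_k X_k]\leq c_0 K/4$.

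To control $\sigma^2$, I would use the orthogonal martingale decomposition $\frk h_{n,k}^z = \sum_{j=0}^{k-1}\Delta_j$ on $\mcl B_{n-k}(z)$, where $\Delta_j := \frk h_{n,j+1}^z - \frk h_{n,j}^z$ is the increment of the martingale $(\frk h_{n,j}^z)_j$ under the filtration $(\sigma(h_n|_{\mcl B_n\setminus \mcl B_{n-j}^\circ(z)}))_j$; since the $\Delta_j$ are orthogonal across $j$, exchanging the order of summation yields
\[
\op{Var}\!\left(\sum_k(\frk h_{n,k}^z(u_k) - \frk h_{n,k}^z(v_k))\right) = \sum_j \op{Var}\!\left(\sum_{k>j}(\Delta_j(u_k) - \Delta_j(v_k))\right).
\]
For $k > j$, the points $u_k,v_k\in \mcl A_{n,k}(z)$ lie at distance $\gtrsim 2^{n-j}$ from $\partial \mcl B_{n-j-1}(z)$, and a first-order Taylor expansion of the discrete Poisson kernel combined with the $O(1)$ variance of dyadic square averages of the GFF yields $\op{Var}(\Delta_j(u_k)-\Delta_j(v_k))\leq C \cdot 4^{-(k-j)}$. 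Cauchy--Schwarz then gives $\op{Var}(\sum_{k>j}(\cdots))\leq C'$ uniformly in $j$, and summing over $j$ yields $\sigma^2\leq C' K$. Borell--TIS provides $\BB P[\sum_k X_k - \BB E[\sum_k X_k]>t]\leq \exp(-t^2/(2\sigma^2))$; taking $t=3c_0K/4 + C$ and using $(3c_0 K/4+C)^2\geq \tfrac{3c_0}{2}KC$, we get $\BB P[\sum_k X_k > c_0 K + C]\leq e^{-c'K}e^{-c_1 C}$, which is at most $c_0 e^{-c_1 C}$ after adjusting constants (using $K\geq 2$).

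The main technical obstacle is the geometric decay $\op{Var}(\Delta_j(u_k)-\Delta_j(v_k)) \lesssim 4^{-(k-j)}$, uniform in $n$ and $z$. The naive bound $\op{Var}(\Delta_j(u_k)-\Delta_j(v_k))\leq C$ would give only $\sigma^2 = O(K^2)$, producing Gaussian tails $e^{-C^2/K^2}$ too weak in the regime $C\ll K$. Establishing the decay rigorously requires combining a Poisson-kernel gradient estimate with a sharp variance bound on a weighted GFF average on the discrete lattice, and carrying out the constants uniformly across all relevant scales is where I expect the real work to lie.
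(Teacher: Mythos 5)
Your overall strategy matches the paper's: write the sum of maxima as the supremum of a single centered Gaussian process, bound its mean by $O(K)$ and its pointwise variance by $O(K)$, and apply Borell--TIS. The expectation bound is essentially the paper's (the paper cites~\cite[Lemma 3.10]{bdz-gff-max} for the increment variance and Fernique; note that that lemma gives $\op{Var}(\frk h_{n,k}^z(u)-\frk h_{n,k}^z(v)) \preceq |u-v|/2^{n-k}$, linear rather than the quadratic bound you assert, but the linear rate already suffices for chaining). The final Borell--TIS bookkeeping is the same.

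Where you genuinely diverge is the variance bound. You decompose $\frk h_{n,k}^z$ as a martingale in $k$, exchange the order of summation using orthogonality of the increments $\Delta_j$, and reduce to the single-scale estimate $\op{Var}(\Delta_j(u_k)-\Delta_j(v_k)) \lesssim 4^{-(k-j)}$. The paper instead expands the variance directly into cross-covariances $\BB E[(\frk h_{n,j}^z(u_j)-\frk h_{n,j}^z(v_j))(\frk h_{n,k}^z(u_k)-\frk h_{n,k}^z(v_k))]$ for $j<k$, uses the Markov property in the form $\BB E[\frk h_{n,k}^z(u_k)-\frk h_{n,k}^z(v_k)\,|\,\frk h_{n,j}^z] = \frk h_{n,j}^z(u_k)-\frk h_{n,j}^z(v_k)$, and then applies Cauchy--Schwarz together with the same increment estimate~\cite[Lemma 3.10]{bdz-gff-max} to get decay $2^{-(k-j)/2}$, which sums to $O(K)$. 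The paper's route is more economical: it reuses a single black-box lemma for both the mean and variance bounds and avoids having to establish any new estimate. Your route would also work in principle, but as you acknowledge, the estimate $\op{Var}(\Delta_j(u_k)-\Delta_j(v_k))\lesssim 4^{-(k-j)}$ is the real work and you do not prove it; the conditional-expectation trick sidesteps exactly that labor, at the modest cost of the slower decay $2^{-(k-j)/2}$, which is still summable.

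One small bookkeeping issue with your martingale telescoping: $\frk h_{n,k}^z = \sum_{j}\Delta_j$ only after specifying the base scale $j_0$ at which $\mcl B_{n-j_0}(z)\subset\mcl B_n$ first holds (and $\frk h_{n,j_0}^z$ is generally nonzero), so your expansion $\op{Var}(\sum_k(\cdots)) = \sum_j\op{Var}(\sum_{k>j}(\cdots))$ is missing a contribution from $\frk h_{n,j_0}^z$ itself; this extra independent term is easily absorbed but should be tracked.
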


We note that a similar estimate to Lemma~\ref{lem-harmonic-max} is proven for the continuum GFF in~\cite[Proposition 4.3]{mq-geodesics}. 
The proof of Lemma~\ref{lem-harmonic-max} is based on standard Gaussian estimates (namely, the Borell-TIS inequality and Fernique's criterion). We will need the following basic variance estimate for $\frk h_{n,k}^z$, which is an immediate consequence of~\cite[Lemma 3.10]{bdz-gff-max} (applied with $\delta$ equal to a universal constant). 

\begin{lem}[\!\!\cite{bdz-gff-max}] \label{lem-harmonic-square}
There is a universal constant $c> 0$ such that for each $n\in\BB N$, each $z\in \mcl B_n$ with $\mcl B_{n-k}(z)\subset\mcl B_n$, and each $u,v \in  \mcl B_{n-k-1/2}(z) $,
\eqb
\BB E\left[ \left( \frk h_{n,k}^z(u) - \frk h_{n,k}^z(v)\right)^2 \right] \leq c \frac{ |u-v| }{2^{n- k} } .
\eqe 
\end{lem}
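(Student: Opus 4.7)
The plan is to deduce this bound directly from the cited estimate \cite[Lemma 3.10]{bdz-gff-max}, which is precisely an $L^2$ regularity estimate for the harmonic part of a DGFF on a macroscopic interior subregion of a square. The task reduces to verifying two simple facts and matching the hypotheses.

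First, I would check that $\mcl B_{n-k-1/2}(z)$ lies at macroscopic distance from $\partial\mcl B_{n-k}(z)$. From the definitions in Section~\ref{sec-proof-setup},
\[
\operatorname{dist}\bigl(\mcl B_{n-k-1/2}(z),\,\partial\mcl B_{n-k}(z)\bigr) \;=\; 2^{n-k}-2^{n-k-1/2} \;=\; (1-2^{-1/2})\,2^{n-k},
\]
which is a constant fraction of the side length of $\mcl B_{n-k}(z)$. This lets the parameter $\delta$ appearing in~\cite[Lemma 3.10]{bdz-gff-max} be taken equal to the universal constant $1-2^{-1/2}$, so the resulting implicit constant is universal (independent of $n$, $k$, and $z$).

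Second, I would identify $\frk h_{n,k}^z$ as exactly the object whose regularity that lemma controls. By the definition~\eqref{eqn-harmonic-part}, $\frk h_{n,k}^z$ is the conditional expectation of $h_n$ given $h_n|_{\mcl B_n\setminus\mcl B_{n-k}^\circ(z)}$, and by the domain Markov property of the discrete GFF this equals the discrete harmonic extension of $h_n|_{\partial\mcl B_{n-k}^\circ(z)}$ into $\mcl B_{n-k}(z)$. This is precisely the ``harmonic part'' in the Markov decomposition of the zero-boundary DGFF $h_n$ across the nested pair of squares $\mcl B_{n-k}(z)\subset\mcl B_n$, which is the quantity considered in~\cite[Lemma 3.10]{bdz-gff-max} (after the harmless translation sending $z$ to the origin, under which the DGFF's law on a translated box is unchanged).

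Combining these two observations and applying~\cite[Lemma 3.10]{bdz-gff-max} with $\delta=1-2^{-1/2}$ yields the claimed bound
\[
\BB E\bigl[(\frk h_{n,k}^z(u)-\frk h_{n,k}^z(v))^2\bigr] \;\leq\; c\,\frac{|u-v|}{2^{n-k}}
\]
for all $u,v\in\mcl B_{n-k-1/2}(z)$, with a universal constant $c$. I do not foresee a substantive obstacle; the only item requiring care is to absorb the multiplicative factor $\pi/2$ appearing in the normalization~\eqref{eqn-Green} (versus whatever convention is used in~\cite{bdz-gff-max}) into the constant $c$, which is a universal rescaling. In short, the entire proof is a direct invocation of the cited lemma once the geometric setup and the Markov decomposition are matched to its hypotheses.
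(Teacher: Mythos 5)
Your proposal is correct and matches the paper's treatment: the paper states that Lemma~\ref{lem-harmonic-square} is an immediate consequence of~\cite[Lemma 3.10]{bdz-gff-max} applied with $\delta$ equal to a universal constant, and gives no further proof. Your verification of the macroscopic-distance condition, the identification of $\frk h_{n,k}^z$ via the Markov property, and the absorption of the $\pi/2$ normalization into $c$ are exactly the routine checks that justify this citation.
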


\begin{proof}[Proof of Lemma~\ref{lem-harmonic-max}]
Observe that
\allb \label{eqn-harmonic-max-sum}
&\sum_{k=2}^K \max_{u,v\in \mcl A_{n,k}(z) } (\frk h_{n,k}^z(u) - \frk h_{n,k}^z(v)) \notag\\
&\qquad= \max\left\{ \sum_{k=2}^K \left( \frk h_{n,k}^z(u_k) - \frk h_{n,k}^z(v_k) \right) : u_2,v_2 \in \mcl A_{n,2}(z) ,\dots , u_K , v_K \in \mcl A_{n,K}(z) \right\}  ,
\alle
so the random variable which we are interested in is the maximum of a centered Gaussian process. 
We will now estimate the quantity in~\eqref{eqn-harmonic-max-sum} using the Borell-TIS inequality.

We first estimate the expectation of the maximum. 
By Lemma~\ref{lem-harmonic-square} and Fernique's inequality~\cite{fernique-criterion} (see, e.g.,~\cite[Lemma 3.5]{bdz-gff-max}), for each $k\in [2 ,K]_{\BB Z}$, 
\eqbn
\BB E\left[ \max_{u \in \mcl A_{n,k}(z)} |\frk h_{n,k}^z(u)| \right] \preceq 1 ,
\eqen
with a universal implicit constant.  Summing this estimate gives 
 \eqb \label{eqn-harmonic-max-exp}
\BB E\left[ \sum_{k=2}^K \max_{u \in \mcl A_{n,k}(z) } (\frk h_{n,k}^z(u) - \frk h_{n,k}^z(v))  \right] \preceq  K .
\eqe

We now need to estimate the pointwise variance of the Gaussian process whose maximum we are taking in~\eqref{eqn-harmonic-max-sum}.  
For any fixed choice of $u_2,v_2 \in \mcl A_{n,2}(z) ,\dots , u_K , v_K \in \mcl A_{n,K }(z)$,  
\allb \label{eqn-harmonic-cov-sum}
\op{Var}\left( \sum_{k=2}^K (\frk h_{n,k}^z(u_k) - \frk h_{n,k}^z(v_k))   \right) 
&= 2 \sum_{j=2}^{K-1} \sum_{k=j+1}^K \BB E\left[ (\frk h_{n,j}^z(u_j) -  \frk h_{n,j}^z(v_j) )   (\frk h_{n,k}^z(u_k) -  \frk h_{n,k}^z(v_k) )   \right]  \notag\\
&\qquad + \sum_{k=2}^K \BB E\left[(\frk h_{n,k}^z(u_k) -  \frk h_{n,k}^z(v_k) )^2 \right] . 
\alle

To estimate the first sum on the right in~\eqref{eqn-harmonic-cov-sum}, we note that for $j < k$,  
\allb \label{eqn-harmonic-cov-cond}
&\BB E\left[ (\frk h_{n,j}^z(u_j) -  \frk h_{n,j}^z(v_j) )   (\frk h_{n,k}^z(u_k) -  \frk h_{n,k}^z(v_k) )   \right] \notag\\
&\qquad = \BB E\left[ (\frk h_{n,j}^z(u_j) -  \frk h_{n,j}^z(v_j) )   \BB E\left[ (\frk h_{n,k}^z(u_k) -  \frk h_{n,k}^z(v_k) ) \,|\, \frk h_{n,j}^z\right] \right] .
\alle
Recall that $h_n|_{\mcl B_{n-j}(z)} $ is the sum of $\frk h_{n,j}^z$ and an independent zero-boundary GFF on $\mcl B_{n-j}(z)$. 
Hence, $ \BB E\left[ (\frk h_{n,k}^z(u_k) -  \frk h_{n,k}^z(v_k) ) \,|\, \frk h_{n,j}^z \right] = \frk h_{n,j}^z(u_k) - \frk h_{n,j}^z(v_k)$. We can therefore bound the right side of~\eqref{eqn-harmonic-cov-cond} by applying the Cauchy-Schwarz inequality, followed by Lemma~\ref{lem-harmonic-square} (with $j$ in place of $k$), to get
\allb \label{eqn-harmonic-cov-cs}
\BB E\left[ (\frk h_{n,j}^z(u_j) -  \frk h_{n,j}^z(v_j) )   (\frk h_{n,k}^z(u_k) -  \frk h_{n,k}^z(v_k) )   \right] 
&\leq \BB E\left[  \left( \frk h_{n,j}^z(u_j) -  \frk h_{n,j}^z(v_j) \right)^2 \right]^{1/2} \BB E\left[ \left( \frk h_{n,j}^z(u_k) -  \frk h_{n,j}^z(v_k) \right)^2 \right]^{1/2} \notag\\
&\preceq   \frac{|u_j - v_j|^{1/2}  |u_k - v_k|^{1/2}}{2^{n-j} } \notag\\
&\preceq 2^{-(k-j)/2}
\alle
with a universal implicit constant. Note that in the last line, we used that $|u_k-v_k| \preceq 2^{n-k}$ for each $k\in [2,K]_{\BB Z}$. 

We now use~\eqref{eqn-harmonic-cov-cs} to bound the first sum on the right side of~\eqref{eqn-harmonic-cov-sum} and Lemma~\ref{lem-harmonic-square} to bound the second sum. 
This leads to
\allb \label{eqn-harmonic-cov}
\op{Var}\left( \sum_{k=2}^K (\frk h_{n,k}^z(u_k) - \frk h_{n,k}^z(v_k))   \right) 
&\preceq  \sum_{j=2}^{K-1} \sum_{k=j+1}^K  2^{-(k-j)/2} + \sum_{k=2}^K \frac{|u_k -v_k|}{2^{n-k} } \notag\\
&\leq \sum_{j=2}^{K-1} O(1)  + \sum_{k=2}^K O(1) \notag\\
&\preceq K .
\alle

By~\eqref{eqn-harmonic-max-exp} and~\eqref{eqn-harmonic-cov}, we can apply the Borell-TIS inequality~\cite{borell-tis1,borell-tis2} (see, e.g.,~\cite[Theorem 2.1.1]{adler-taylor-fields}) to bound the maximum on the right side of~\eqref{eqn-harmonic-max-sum}. This gives~\eqref{eqn-harmonic-max}.
\end{proof}

Since the zero-boundary GFF $\rng h_{n,k}^z$ from~\eqref{eqn-zero-bdy-part} is independent from $h_n|_{\mcl B_n\setminus \mcl B_{n-k}^\circ(z)}$, we can get independence for certain events defined in terms of $\rng h_{n,k}^z$, as the following lemma demonstrates. 

\begin{lem}\label{l.independence}
Let $n\in \BB N$, let $z\in\mcl B_{n-1/2}$, and let $\{E_k\}_{k \in [2,n-1]_{\BB Z}}$ be events such that each $E_k$ is measurable w.r.t. $\sigma\left( \rng h_{n,k}^z |_{\mcl A_{n,k}(z)} \right)$. Then the events $\{E_k\}_{k\in [2,n-1]_{\BB Z}}$ are independent.
\end{lem}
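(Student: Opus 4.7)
The plan is to leverage the Markov decomposition stated just below \eqref{eqn-zero-bdy-part}: writing $\mcl F_k := \sigma\!\left( h_n|_{\mcl B_n \setminus \mcl B_{n-k}^\circ(z)} \right)$, we have that $\frk h_{n,k}^z$ is $\mcl F_k$-measurable and $\rng h_{n,k}^z$ is independent from $\mcl F_k$. The filtration $(\mcl F_k)$ is increasing in $k$, and I will show that for $j<k$ the random variable $\rng h_{n,j}^z|_{\mcl A_{n,j}(z)}$ is already $\mcl F_k$-measurable. Mutual independence of the $E_k$ then falls out by peeling off the largest index.

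The first, essentially set-theoretic, step is to check that $\mcl A_{n,j}(z) \subset \mcl B_n \setminus \mcl B_{n-k}^\circ(z)$ whenever $j<k$. This is immediate from the definitions: $\mcl A_{n,j}(z) = \mcl B_{n-j-1/2}(z)\setminus \mcl B_{n-j-1}(z)$ consists of points at $\ell^\infty$-distance at least $2^{n-j-1}$ from $z$, while $\mcl B_{n-k}^\circ(z)$ consists of points at $\ell^\infty$-distance at most $2^{n-k}-1$ from $z$, and $j<k$ gives $2^{n-j-1}\geq 2^{n-k}>2^{n-k}-1$. Consequently $h_n|_{\mcl A_{n,j}(z)}$ is $\mcl F_k$-measurable. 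The harmonic piece $\frk h_{n,j}^z$ is $\mcl F_j$-measurable by construction, and $\mcl F_j\subseteq \mcl F_k$, so $\frk h_{n,j}^z|_{\mcl A_{n,j}(z)}$ is also $\mcl F_k$-measurable. Subtracting, $\rng h_{n,j}^z|_{\mcl A_{n,j}(z)} = (h_n - \frk h_{n,j}^z)|_{\mcl A_{n,j}(z)}$ is $\mcl F_k$-measurable, and hence so is the event $E_j$.

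To conclude, I fix any $k_1<k_2<\cdots<k_m$ in $[2,n-1]_{\BB Z}$. By the previous paragraph, $E_{k_1},\ldots,E_{k_{m-1}}$ are all $\mcl F_{k_m}$-measurable. On the other hand $E_{k_m}$ is measurable with respect to $\sigma\!\left(\rng h_{n,k_m}^z|_{\mcl A_{n,k_m}(z)}\right)\subseteq \sigma(\rng h_{n,k_m}^z)$, which by the Markov property is independent of $\mcl F_{k_m}$. Therefore $E_{k_m}$ is independent of the tuple $(E_{k_1},\ldots,E_{k_{m-1}})$. Applying the same argument inductively to the smaller collection $(E_{k_1},\ldots,E_{k_{m-1}})$ yields joint independence of $(E_{k_1},\ldots,E_{k_m})$, proving the lemma.

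There is no substantive obstacle here; the only thing to be slightly careful about is the nesting of the annuli and squares used to place $\rng h_{n,j}^z|_{\mcl A_{n,j}(z)}$ inside $\mcl F_k$, which is what makes the Markov property applicable scale by scale.
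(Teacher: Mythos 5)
Your proof is correct and takes essentially the same approach as the paper's: both show that $\rng h_{n,j}^z|_{\mcl A_{n,j}(z)}$ (hence $E_j$) is measurable with respect to $\sigma\left(h_n|_{\mcl B_n\setminus\mcl B_{n-k}^\circ(z)}\right)$ for $j<k$, while $E_k$ is independent of that $\sigma$-algebra, and then peel off the largest index. The only presentational difference is that the paper records the containment once as $E_k\in\sigma\left(h_n|_{\mcl B_n\setminus\mcl B_{n-k-1}^\circ(z)}\right)$ and leaves the nesting to the reader, whereas you verify the geometric disjointness $\mcl A_{n,j}(z)\cap\mcl B_{n-k}^\circ(z)=\emptyset$ explicitly for each pair $j<k$.
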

\begin{proof}
For each $k\in [2,n-1]_{\BB Z}$,  
\eqbn
E_k \in  \sigma\left( \rng h_{n,k}^z |_{\mcl A_{n,k}(z)} \right) \subset   \sigma\left( h_n|_{\mcl B_n \setminus \mcl B_{n-k-1}^\circ(z)} \right) .
\eqen
On the other hand, $\rng h_{n,k}^z$, hence also $E_k$, is independent from $h_n|_{\mcl B_n\setminus \mcl B_{n-k}^\circ(z)}$. This implies the lemma statement. 
\end{proof}

The lemma above is useful to estimate the number of values of $k$ for which $E_k$ occurs. The particular estimate we need is the following corollary.

\begin{cor} \label{cor-annulus-ind}
For each $a \in (0,1)$ and $b> 1$, there exists $p = p(a,b) \in (0,1)$ such that the following is true. 
Let $K_1,K_2,n\in\BB N$ with $2 \leq K_1 < K_2 \leq n-1$ and let $z\in \mcl B_{n-1/2}$. 
For $k\in [K_1,K_2]_{\BB Z}$, let $E_k$ be an event which is measurable w.r.t.\ $\sigma\left( \rng h_{n,k}^z |_{\mcl A_{n,k}(z)} \right)$ and satisfies $\BB P[E_k] \geq p$.
Then 
\eqb
\BB P\left[ \#\left\{k\in [K_1,K_2]_{\BB Z} : E_k \: \text{occurs} \right\} \geq a (K_2-K_1) \right]  \geq 1 - 2^{-b (K_2-K_1)} .
\eqe
\end{cor}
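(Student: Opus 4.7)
The plan is to reduce the claim to a standard large-deviations bound for the binomial distribution via the independence input from Lemma~\ref{l.independence}. First, since each $E_k$ is measurable with respect to $\sigma(\rng h_{n,k}^z|_{\mcl A_{n,k}(z)})$, Lemma~\ref{l.independence} tells us that the collection $\{E_k\}_{k \in [K_1,K_2]_{\BB Z}}$ is a collection of mutually independent events. Consequently, the random variable
\eqbn
N := \#\{k \in [K_1,K_2]_{\BB Z} : E_k \text{ occurs}\}
\eqen
stochastically dominates a $\op{Bin}(K_2-K_1+1, p)$ random variable, where $p$ is the common lower bound on $\BB P[E_k]$.

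Next, I would invoke a standard Chernoff-type bound for the left tail of the binomial. For instance, using the relative entropy form, for $a \in (0,1)$ and $p > a$,
\eqbn
\BB P\bigl[ \op{Bin}(K_2-K_1+1, p) < a(K_2-K_1) \bigr] \leq \exp\bigl( -(K_2-K_1) D(a \| p) \bigr),
\eqen
where $D(a \| p) = a \log(a/p) + (1-a)\log((1-a)/(1-p))$ is the Kullback--Leibler divergence. Since $D(a\|p) \to +\infty$ as $p \to 1$ (for fixed $a < 1$), we may choose $p = p(a,b) \in (a,1)$ sufficiently close to $1$ that $D(a\|p) \geq b \log 2$. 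With this choice of $p$, the probability in question is at most $2^{-b(K_2-K_1)}$, which is precisely what we want.

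There is no real obstacle here: the only substantive input is the independence supplied by Lemma~\ref{l.independence}, and the rest is a direct application of Chernoff's inequality to a binomial variable whose success probability can be taken arbitrarily close to $1$. The choice of $p$ depends only on $a$ and $b$ as required, with no dependence on $n$, $z$, $K_1$, or $K_2$.
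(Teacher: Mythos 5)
Your proposal is correct and takes essentially the same approach as the paper, whose entire proof is the single sentence "The corollary follows from Lemma~\ref{l.independence} together with a basic tail estimate for the binomial distribution." You have correctly identified the two ingredients (independence from Lemma~\ref{l.independence}, then stochastic domination of a binomial followed by a Chernoff/relative-entropy lower-tail bound) and verified that $D(a\|p) \to \infty$ as $p \to 1$, so $p$ can be chosen depending only on $a$ and $b$.
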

\begin{proof}
The corollary follows from Lemma~\ref{l.independence} together with a basic tail estimate for the binomial distribution.
\end{proof}

The following lemma is the key input in the proof of Lemma~\ref{lem-gff-good-path}.

\begin{lem} \label{lem-gff-ind-min}
Let $\delta >0$ and $A >1$. There are constants $C , c > 1$ depending only on $\delta$ and $A$ such that for each $n,K\in\BB N$ with $2/\delta \leq K \leq n-1$ and each $z\in \mcl A_n$, the probability that the following two conditions both hold simultaneously is at most $ c 2^{-3 K} $:  
\begin{enumerate} 
\item There are at least $\delta K$ values of $k \in [K/2,K-1]_{\BB Z}$ for which $\min_{u \in \mcl A_{n,k}(z) } \frk h_{n,k}^z(u) \leq - C$. \label{item-gff-ind-min}
\item There is a path $P$ around $\mcl A_n$ such that $h_n \geq -A$ on $P$ and $P \cap \mcl B_{n-K}(z) \not= \emptyset$.   \label{item-gff-ind-path}
\end{enumerate} 
\end{lem}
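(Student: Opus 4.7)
The strategy combines three tools from earlier in the paper: the oscillation control of Lemma~\ref{lem-harmonic-max}, the level-set percolation of Proposition~\ref{prop-level-set-around} applied to the zero-boundary GFFs $-\rng h_{n,k}^z$, and the cross-scale independence of Corollary~\ref{cor-annulus-ind}. The plan is to show that condition (1) forces $\frk h_{n,k}^z$ to be very negative on the whole annulus $\mcl A_{n,k}(z)$ at many scales $k$, which in turn creates a uniformly positive probability at each such scale of finding a loop inside $\mcl A_{n,k}(z)$ on which $h_n<-A$. Corollary~\ref{cor-annulus-ind} will guarantee that, outside an event of probability $\leq 2^{-3K}$, such loops actually appear at enough scales that at least one occurs at a bad-and-tame scale. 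Under condition (2), the path $P$ is then forced to cross such a loop, hitting a vertex with $h_n<-A$ and contradicting $h_n\geq -A$ on $P$.

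Concretely, I would proceed as follows. First apply Lemma~\ref{lem-harmonic-max} with $C_1=\Theta(K)$ to obtain an event $\mcl O$ with $\BB P[\mcl O^c]\leq 2^{-3K}$ on which $\sum_{k\in[2,K]_{\BB Z}}\op{osc}_k\leq c K$ for a universal constant $c$, where $\op{osc}_k:=\max_{u,v\in\mcl A_{n,k}(z)}|\frk h_{n,k}^z(u)-\frk h_{n,k}^z(v)|$. Markov's inequality forces all but a small (linear in $\delta K$) portion of scales $k\in[K/2,K-1]_{\BB Z}$ to satisfy $\op{osc}_k\leq M$ for some $M=M(\delta)$; thus on $\mcl O\cap\{\text{cond (1)}\}$ a fixed positive fraction of scales are simultaneously bad ($\min\leq -C$) and tame ($\op{osc}\leq M$), and on such scales $\frk h_{n,k}^z\leq -C+M$ throughout $\mcl A_{n,k}(z)$. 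Next, for each $k$ set
\[
E_k:=\bigl\{\exists\text{ loop around }\mcl A_{n,k}(z)\text{ inside }\mcl A_{n,k}(z)\text{ on which }\rng h_{n,k}^z\leq \alpha\bigr\},
\]
which is measurable w.r.t.\ $\rng h_{n,k}^z|_{\mcl A_{n,k}(z)}$. Since $-\rng h_{n,k}^z$ is a zero-boundary GFF on $\mcl B_{n-k}(z)$, Proposition~\ref{prop-level-set-around} gives $\BB P[E_k]\geq 1-e^{-c_*\alpha^2}$, which can be made arbitrarily close to $1$ by choosing $\alpha=\alpha(\delta)$ large. In particular one can ensure $\BB P[E_k]\geq p(a,7)$ with $a$ close to $1$ and $p$ the threshold from Corollary~\ref{cor-annulus-ind}; the Corollary then yields, outside an event of probability $\leq 2^{-3K}$, that $E_k$ occurs for all but a small fraction of $k\in[K/2,K-1]_{\BB Z}$.

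Setting $C:=A+M+\alpha+1$, a pigeonhole argument (the bad-and-tame set and the $E_k$-set are both subsets of $[K/2,K-1]_{\BB Z}$ and their sizes sum to more than $K/2$) shows that on the intersection of $\mcl O$, cond (1), and the Corollary event, some scale $k$ lies in both, provided $K$ exceeds a constant $K_0(\delta,A)$. On such a scale, the $E_k$-loop $\ell_k$ satisfies $h_n=\frk h_{n,k}^z+\rng h_{n,k}^z\leq -C+M+\alpha=-A-1<-A$. Under condition (2) the path $P$ enters $\mcl B_{n-K}(z)\subset\mcl B_{n-k-1}(z)$; but since $z\in\mcl A_n$ lies far from the origin and $\mcl B_{n-k-1/2}(z)$ is too small (for $k\geq K/2$) to contain a loop disconnecting the boundaries of $\mcl A_n$, $P$ must exit $\mcl B_{n-k-1/2}(z)$, cross $\mcl A_{n,k}(z)$, and meet $\ell_k$, contradicting $h_n\geq -A$ on $P$. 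The required bound $\BB P[\text{cond (1)}\cap\text{cond (2)}]\leq c\cdot 2^{-3K}$ follows, absorbing into $c$ the finitely many small-$K$ cases with $2/\delta\leq K<K_0(\delta,A)$, where the bound is trivial.

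The main delicacy is the constant calibration—$M$ from the oscillation tail, then $\alpha$ from the $E_k$-probability threshold, then $C:=A+M+\alpha+1$—together with the elementary topological claim that any loop disconnecting the boundaries of $\mcl A_n$ and passing through $\mcl B_{n-K}(z)$ must exit $\mcl B_{n-k-1/2}(z)$ for every $k\in[K/2,K-1]_{\BB Z}$ and hence cross $\mcl A_{n,k}(z)$.
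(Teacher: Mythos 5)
Your proposal reproduces the paper's argument in all essentials: the same three ingredients (Proposition~\ref{prop-level-set-around} applied to the zero-boundary fields $-\rng h_{n,k}^z$ to produce low-valued loops, Lemma~\ref{lem-harmonic-max} with $C=\Theta(K)$ to control $\sum_k \op{osc}_k$, and Corollary~\ref{cor-annulus-ind} for cross-scale independence), the same pigeonhole to locate a scale that is simultaneously bad, tame, and carries a loop on which $h_n=\frk h_{n,k}^z+\rng h_{n,k}^z<-A$, and the same topological step (a path around $\mcl A_n$ hitting $\mcl B_{n-K}(z)$ must exit $\mcl B_{n-k-1/2}(z)$, hence cross the loop). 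The only cosmetic deviation is your extra threshold $K_0(\delta,A)$ absorbed into $c$, which the paper sidesteps by observing that $K\geq 2/\delta$ already makes $\lfloor\delta K/2\rfloor\geq 1$.
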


We note that a path $P$ as in Lemma~\ref{lem-gff-ind-min} exists with high probability when $A$ is large by Proposition~\ref{prop-level-set-around}. 

\begin{proof}[Proof of Lemma~\ref{lem-gff-ind-min}]
Fix a constant $C_1 >1$ to be chosen later, in a manner depending only on $\delta$ and $A$. 
For $k \in \BB N$, let $E_k = E_{n,k}(z,C_1)$ be the event that there is a path $P_k$ around $\mcl A_{n,k}(z)$ such that $\rng h_{n,k}^z \leq C_1 -A - 1 $ on $P_k$. 
Since $\rng h_{n,k}^z$ is a zero-boundary GFF on $\mcl B_{n-k}(z)$, we can apply Proposition~\ref{prop-level-set-around} with $n-k$ in place of $n$ and $-\rng h_{n,k}^z$ in place of $h_n$. This shows that for each $p\in (0,1)$ there exists $C_1 = C_1(p,A)> 1$ such that for this choice of $C_1$, we have $\BB P[E_k] \geq p$ for every $k \in [2,K]_{\BB Z}$ and every $z\in\mcl A_n$. By Corollary~\ref{cor-annulus-ind}, if we choose $p$ to be sufficiently close to 1 (depending on $\delta$ and $A$) then
\eqb \label{eqn-gff-ind-around}
\BB P\left[ \#\left\{k\in [K/2 , K-1]_{\BB Z} : E_k \: \text{occurs} \right\} \geq (1/2 - \delta/4) K \right]  \geq 1 - 2^{-3 K } .
\eqe

By Lemma~\ref{lem-harmonic-max} (applied with a large multiple of $K$ in place of $C$), there exists $C_2 = C_2(\delta) > 1$ and $c_0 > 0$ such that 
\eqb \label{eqn-use-harmonic-max}
\BB P\left[\sum_{k=2}^{K-1} \max_{u,v\in \mcl A_{n,k}(z) } (\frk h_{n,k}^z(u) - \frk h_{n,k}^z(v))  \leq \frac{C_2 \delta}{4} K  \right] \geq 1 - c_0 2^{-3 K} .
\eqe
If the event in~\eqref{eqn-use-harmonic-max} occurs then there are at most $\delta K / 4$ values of $k \in [2,K]_{\BB Z}$ for which $\max_{u,v\in \mcl A_{n,k}(z) } (\frk h_{n,k}^z(u) - \frk h_{n,k}^z(v))  > C_2 $. Therefore,
\eqb \label{eqn-gff-ind-diff}
\BB P\left[  \#\left\{k\in [K/2 , K-1]_{\BB Z} : \max_{u,v\in \mcl A_{n,k}(z) } (\frk h_{n,k}^z(u) - \frk h_{n,k}^z(v)) \leq C_2  \right\} \geq (1/2 - \delta/4) K \right]  \geq 1 -c_0 2^{-3 K } .
\eqe

Henceforth assume that the events in~\eqref{eqn-gff-ind-around} and~\eqref{eqn-gff-ind-diff} occur, which happens with probability at least $1 - c 2^{-3K}$ for a constant $c >1$ depending only on $\delta,A$. Let $C := C_1 + C_2$. To prove the lemma, it suffices to show that if condition~\ref{item-gff-ind-min} in the lemma statement holds with this choice of $C$, then condition~\ref{item-gff-ind-path} does not hold. 

If there $\delta K$ values of $k \in [K/2,K-1]_{\BB Z}$ for which $\min_{u \in \mcl A_{n,k}(z) } \frk h_{n,k}^z(u) \leq - C$, then the events in~\eqref{eqn-gff-ind-around} and~\eqref{eqn-gff-ind-diff} imply that there must be at least $\lfloor \delta K/2 \rfloor$ values of $k\in [K/2,K-1]_{\BB Z}$ for which $E_k$ occurs, $\max_{u,v\in \mcl A_{n,k}(z) } (\frk h_{n,k}^z(u) - \frk h_{n,k}^z(v)) \leq C_2 $, and $\min_{u \in \mcl A_{n,k}(z) } \frk h_{n,k}^z(u) \leq - C$. 

Since $K \geq 2/\delta$, we have $\lfloor \delta K /2 \rfloor \geq 1$. Hence we can choose one value of $k$ as in the preceding paragraph. Let $P_k$ be the path around $\mcl A_{n,k}(z)$ as in the definition of $E_k$, so that $\rng h_{n,k}^z \leq C_1 - A - 1$ on $P_k$. Since $k \in [K/2,K-1]_{\BB Z}$, the path $P_k$ disconnects $\mcl B_{n-K}(z)$ from $\infty$ and intersects $\mcl A_n$. Hence every path around $\mcl A_n$ which intersects $\mcl B_{n-K}(z)$ must also intersect $P_k$. To prove the lemma, it therefore suffices to show that $\max_{z\in P_k} h_n(u) < -A$ (so that there can be no path around $\mcl A_n$ which intersects $\mcl B_{n-K}(z)$ on which $h_n \geq -A$). Indeed, we have
\alb
\max_{u \in P_k} h_n(u) 
&\leq \max_{u\in P_k} \rng h_{n,k}^z(u) + \max_{u \in \mcl A_{n,k}(z)} \frk h_{n,k}^z(u) \notag\\
&\leq \max_{u\in P_k} \rng h_{n,k}^z(u) + \min_{u \in \mcl A_{n,k}(z)} \frk h_{n,k}^z(u) + \max_{u,v\in \mcl A_{n,k}(z)} (\frk h_{n,k}^z(u) - \frk h_{n,k}^z(v))   \notag\\
&\leq (C_1 - A - 1) - C + C_2 \quad \text{(by our choice of $k$)} \notag\\ 
&= -A  -1 \quad \text{(by the definition of $C$)} .
\ale
\end{proof}

\begin{proof}[Proof of Lemma~\ref{lem-gff-good-path}]
By Proposition~\ref{prop-level-set-around}, we can find $A = A(\delta) >1$ such that with probability at least $1-\delta/2$, there is a path $P$ around $\mcl A_n$ such that $h_n \geq -A$ on $P$. Let $C = C(\delta/2 ,A) > 1$ be as in Lemma~\ref{lem-gff-ind-min} for this choice of $A$, with $\delta/2$ in place of $\delta$. By Lemma~\ref{lem-gff-ind-min} (with $\delta/2$ in place of $\delta$) and a union bound over $O_K(2^{2K})$ possible points $z \in (2^{-n-K-1} \BB Z^2 ) \cap \mcl A_n$, there is a constant $c>0$ depending only on $\delta$ such that with probability at least $1- c 2^{-K}$, the following is true. For each $z\in 2^{n-K-1} \BB Z^2$ such that $P$ hits $\mcl B_{n-K}(z)$, there are at most $(\delta/2) K$ values of $k \in [K/2,K-1]_{\BB Z}$ for which $\min_{u \in \mcl A_{n,k}(z) } \frk h_{n,k}^z(u) \leq - C$. Hence for each such $z$, there are at least $(1/2-\delta) K$ values of $k \in [K/3,K-1]_{\BB Z}$ for which $\min_{u \in \mcl A_{n,k}(z) } \frk h_{n,k}^z(u) \geq - C$. 

We now choose $K_*$ to be large enough so that $c 2^{-K_*} \leq \delta/2$. Then if $K\geq K_*$, it holds with probability at least $1-\delta$ that the path $P$ as above exists and for each $z\in 2^{n-K-1} \BB Z^2$ such that $P$ hits $\mcl B_{n-K}(z)$, there are at least $(1/2-\delta) K$ values of $k \in [K/2,K-1]_{\BB Z}$ for which $\min_{u \in \mcl A_{n,k}(z) } \frk h_{n,k}^z(u) \geq - C$. Any path in $\BB Z^2$ from  $\bdy \mcl B_{n-1}$ to $\bdy\mcl B_{n-1/2}$ must cross $P$, so must hit a square of the form $\mcl B_{n-K}(z)$ for some $z\in 2^{n-K-1} \BB Z^2$ such that $P$ also hits $\mcl B_{n-K}(z)$. But, we know that each $z$ for which $P$ hits $\mcl B_{n-K}(z)$ satisfies the condition in the lemma statement, so this concludes the proof. 
\end{proof}

\subsection{Inductive argument}
\label{sec-induct} 

Theorem~\ref{thm-lfpp-lower} will be a consequence of the following lemma applied inductively. 

\begin{lem} \label{lem-induct}
Let $q_0 \in (0,1)$ be chosen so that the conclusion of Corollary~\ref{cor-annulus-ind} holds with $a = 3/4$ and $b = 6$. 
For each $q\in [q_0,1)$, there are constants $C> 0$ and $K_* \in \BB N$ depending only on $q$ such that if $n,K \in \BB N$ with $K_* \leq K \leq n-1$, then the following is true.
If $R > 0$ is such that 
\eqb \label{eqn-ind-hyp}
\BB P\left[ D_m\left(\text{across $\mcl A_m$} \right) \geq R \right] \geq q , \quad\forall m \in [n - K +1 , n-K/2]_{\BB Z} ,
\eqe
then 
\eqb \label{eqn-ind-conclusion}
\BB P\left[ D_n\left(\text{across $\mcl A_n$} \right) \geq \frac14 K   e^{-\xi C} R \right] \geq q .
\eqe
\end{lem}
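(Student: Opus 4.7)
The plan is to combine Lemma~\ref{lem-gff-good-path} with the inductive hypothesis~\eqref{eqn-ind-hyp} propagated through Corollary~\ref{cor-annulus-ind}, exactly as sketched in Section~\ref{sec-proof-setup}. The core idea is that any path across $\mcl A_n$ must enter some square $\mcl B_{n-K}(z)$ around a ``good'' center $z$, after which any escape to the boundary of $\mcl A_n$ forces a crossing of each nested annulus $\mcl A_{n,k}(z)$ for $k \in [K/2, K-1]_{\BB Z}$; on a constant fraction of these scales, both the harmonic part $\frk h_{n,k}^z$ is bounded below and the zero-boundary LFPP distance $\rng D_{n,k}^z$ across $\mcl A_{n,k}(z)$ is at least $R$, contributing $e^{-\xi C} R$ per scale.

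First, fix $\delta > 0$ depending only on $q$ with $\delta \leq 1/8$ and $\delta \leq (1-q)/2$. Apply Lemma~\ref{lem-gff-good-path} with this $\delta$ to obtain constants $C_0$ and $K_1$ and an event $F$ of probability at least $1 - \delta$ on which every path across $\mcl A_n$ hits some $\mcl B_{n-K}(z)$, $z \in (2^{n-K-1}\BB Z^2) \cap \mcl B_n$, such that at least $3K/8$ values $k \in [K/2,K-1]_{\BB Z}$ satisfy $\min_{u \in \mcl A_{n,k}(z)} \frk h_{n,k}^z(u) \geq -C_0$. Next, for each candidate $z$ (there are $O(2^{2K})$ of them) and each $k \in [K/2, K-1]_{\BB Z}$, the event
\eqbn
E_k^z := \left\{ \rng D_{n,k}^z\bigl(\text{across } \mcl A_{n,k}(z)\bigr) \geq R \right\}
\eqen
is measurable with respect to $\sigma\bigl(\rng h_{n,k}^z|_{\mcl A_{n,k}(z)}\bigr)$. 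Since $\rng h_{n,k}^z$ has the law of a zero-boundary discrete GFF on $\mcl B_{n-k}(z)$, hypothesis~\eqref{eqn-ind-hyp} gives $\BB P[E_k^z] \geq q \geq q_0$, so Corollary~\ref{cor-annulus-ind} applied with $a = 3/4$ and $b = 6$ guarantees that, for this specific $z$, at least $3K/8$ of the events $E_k^z$ occur with probability at least $1 - 2^{-3K+O(1)}$. A union bound over the $O(2^{2K})$ candidate $z$ produces an event $G$ of probability at least $1 - O(2^{-K})$ on which this holds for \emph{every} candidate $z$ simultaneously.

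On $F \cap G$, fix any path $\pi$ across $\mcl A_n$ and let $z$ be the good center supplied by $F$. Among the $K/2$ scales $k \in [K/2, K-1]_{\BB Z}$, at least $3K/8$ are harmonically good and at least $3K/8$ satisfy $E_k^z$, so by inclusion--exclusion at least $K/4$ scales are \emph{very good} in the sense that both conditions hold. On a very good scale, the pointwise bound $h_n = \frk h_{n,k}^z + \rng h_{n,k}^z \geq -C_0 + \rng h_{n,k}^z$ on $\mcl A_{n,k}(z)$ gives $D_n(\text{across } \mcl A_{n,k}(z)) \geq e^{-\xi C_0}\, \rng D_{n,k}^z(\text{across } \mcl A_{n,k}(z)) \geq e^{-\xi C_0} R$. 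The annuli $\{\mcl A_{n,k}(z)\}_{k \in [K/2, K-1]_{\BB Z}}$ are pairwise disjoint and all sit between $\mcl B_{n-K}(z)$ and $\mcl B_{n-K/2-1/2}(z)$; moreover, the endpoints of $\pi$ are separated by Euclidean distance at least $2^{n-1}(\sqrt 2 - 1)$, which exceeds the diameter of $\mcl B_{n-K/2-1/2}(z)$ once $K$ is large enough. Hence at least one endpoint of $\pi$ lies outside $\mcl B_{n-K/2-1/2}(z)$, and since $\pi$ enters $\mcl B_{n-K}(z)$, it must cross every $\mcl A_{n,k}(z)$ for $k \in [K/2, K-1]_{\BB Z}$ at least once. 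Summing the $K/4$ very good contributions yields $D_n(\text{across } \mcl A_n) \geq \tfrac{1}{4} K e^{-\xi C_0} R$; choosing $K_* \geq K_1$ large enough that $\delta + O(2^{-K_*}) \leq 1 - q$ gives $\BB P[F \cap G] \geq q$, establishing~\eqref{eqn-ind-conclusion} with $C = C_0$.

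The main obstacle is that the arithmetic is tight: from the $K/2$ candidate scales we can afford to lose at most $K/8$ to harmonic badness and at most $K/8$ to inductive failure, leaving exactly $K/4$ very good scales. The parameter $a = 3/4$ in Corollary~\ref{cor-annulus-ind} and the fraction $1/2 - \delta \geq 3/8$ in Lemma~\ref{lem-gff-good-path} are matched precisely so this inclusion--exclusion clears the threshold, and the choice $b = 6$ ensures the per-$z$ failure probability $2^{-3K+O(1)}$ survives the union bound over $O(2^{2K})$ points $z$. A secondary, easier point is verifying that $\pi$ genuinely crosses every very good annulus; this rests only on disjointness of the nested $\mcl A_{n,k}(z)$ and the fact that the two endpoints of $\pi$ are separated by distance of order $2^n$, much larger than the $2^{n-K/2}$ scale of the outermost annulus.
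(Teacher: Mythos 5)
Your proposal matches the paper's proof of Lemma~\ref{lem-induct} step for step: apply Lemma~\ref{lem-gff-good-path} with $\delta = \min\{1/8, (1-q)/2\}$ to produce the high-probability good-annulus event, apply Corollary~\ref{cor-annulus-ind} (with $a=3/4$, $b=6$) plus a union bound over $O(2^{2K})$ grid points to get the inductive-distance event, then intersect and use inclusion--exclusion to find at least $K/4$ ``very good'' scales contributing $e^{-\xi C}R$ each, for a total of $\tfrac14 K e^{-\xi C} R$. The only cosmetic difference is that you spell out the geometric reason a path across $\mcl A_n$ must cross every nested annulus $\mcl A_{n,k}(z)$ (by comparing the endpoint separation $2^{n-1}(\sqrt 2 - 1)$ to the diameter of $\mcl B_{n-K/2-1/2}(z)$), whereas the paper simply asserts this; that is a reasonable elaboration, not a different route.
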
 
\begin{proof}
Throughout the proof we assume that $n,K \in \BB N$ with $K \leq n-1$ and $R>0$ are such that~\eqref{eqn-ind-hyp} holds.

Each of the fields $\rng h_{n,k}^z$ for $z\in\mcl A_n$ is a zero-boundary GFF on a translated copy of $\mcl A_{n-k}$. 
Therefore,~\eqref{eqn-ind-hyp} implies that for each $z\in\mcl A_n$ and each $k \in [K/2,K-1]_{\BB Z}$,
\eqb  \label{eqn-dist-around-scales}
\BB P\left[\rng D_{n,k}^z\left(\text{across $\mcl A_{n,k}(z)$} \right) \geq R \right] \geq q ;
\eqe 
recall that $\rng D_{n,k}^z$ is the LFPP metric associated with $\rng h_{n,k}^z$, as in Section~\ref{sec-proof-setup}. 
By~\eqref{eqn-dist-around-scales}, our choice of $q_0$, and Corollary~\ref{cor-annulus-ind} applied with 
\eqbn
E_k = \left\{ \rng D_{n,k}^z\left(\text{across $\mcl A_{n,k}(z)$} \right) \geq R\right\}  ,
\eqen
there is a universal constant $c>0$ such that for each $z\in \mcl A_n$ it holds with probability at least $1 - c 2^{-3 K }$ that there are at least $3K / 8$ values of $k \in [K/2, K-1]_{\BB Z}$ for which $\rng D_{n,k}^z\left(\text{across $\mcl A_{n,k}(z)$} \right) \geq R $. 

By a union bound over $O_K(2^{2K})$ points $z\in (2^{-n-K-1} \BB Z^2 ) \cap \mcl A_n$, after possibly increasing $c$ we can arrange that with probability at least $1 - c 2^{-K}$, it holds for \emph{every} $z\in  (2^{-n-K-1} \BB Z^2 ) \cap \mcl A_n$ that there are at least $3K / 8$ values of $k \in [K/2, K-1]_{\BB Z}$ for which $\rng D_{n,k}^z\left(\text{across $\mcl A_{n,k}(z)$} \right) \geq  R $. 

By Lemma~\ref{lem-gff-good-path} (applied with $\delta = \min\{1/8 , (1- q )/2\}$), there are constants $C   > 0$ and $K_0 \in \BB N$ depending only on $q$ such that if $K \geq K_0$, then with probability at least $1 - (1-q)/2$, each path across $\mcl A_n$ hits a square of the form $\mcl B_{n-K}(z)$ for some $z\in 2^{n-K-1} \BB Z^2$ with the following property: there are at least $3 K /8$ values of $k\in [K/2 , K-1]_{\BB Z}$ for which $\min_{u \in \mcl A_{n,k}(z) } \frk h_{n,k}^z(u) \geq - C$.   

Now let $K_* = K_*(q) \geq K_0$ be chosen so that $c 2^{-K_0} \leq (1-q)/2$. If $K\geq K_*$, the combination of the preceding two paragraphs shows that with probability at least $q$, each path across $\mcl A_n$ hits a square of the form $\mcl B_{n-K}(z)$ for some $z\in 2^{n-K-1} \BB Z^2$ with the following property: there are at least $K/4$ values of $k\in [K/2 , K-1]_{\BB Z}$ for which $\min_{u \in \mcl A_{n,k}(z) } \frk h_{n,k}^z(u) \geq - C$ and $\rng D_{n,k}^z\left(\text{across $\mcl A_{n,k}(z)$} \right) \geq R $. For each such $k$, 
\allb \label{eqn-multiscale-across}
D_n\left(\text{across $\mcl A_{n,k}(z)$} \right) 
 \geq \exp\left( \xi \min_{u \in \mcl A_{n,k}(z) } \frk h_{n,k}^z(u) \right) \rng D_{n,k}^z \left(\text{across $\mcl A_{n-k}(z)$} \right)  
 \geq e^{-\xi C} R   .
\alle

If a path across $\mcl A_n$ hits $\mcl B_{n-K}(z)$, then it must cross each of the annuli $\mcl A_{n,k}(z)$ for $k\in [K/2,K-1]_{\BB Z}$ at least once (it must cross each of these annuli twice if $z$ is at distance at least $2^{n-K/2}$ from the boundary of $\mcl A_n$). 
By~\eqref{eqn-multiscale-across}, the $D_n$-length of each path across $\mcl A_n$ is at least $\frac14 K e^{-\xi C} R $. 
Thus~\eqref{eqn-ind-conclusion} holds.
\end{proof}

\begin{proof}[Proof of Theorem~\ref{thm-lfpp-lower}]
Let $q \in [q_0,1)$, $C>0$, and $K_* \in \BB N$ be as in Lemma~\ref{lem-induct}. 
We will prove the theorem by an inductive argument based on Lemma~\ref{lem-induct}.
For the base case, we need an a priori lower bound for $D_n(\text{across $\mcl A_n$})$, which will come from Proposition~\ref{prop-level-set-around}. 
\medskip

\noindent\textit{Step 1: a priori lower bound for annulus crossing distance.}
By Proposition~\ref{prop-level-set-around}, there exists a constant $C' > 0$ depending only on $q$ such that for each $n\in\BB N$, it holds with probability at least $q$ that there is a path $P$ around $\mcl A_n$ such that $h_n \geq - C'$  on $P$. 
Each path across $\mcl A_n$ must cross $P$, so
\eqb \label{eqn-base-case0}
\BB P\left[ D_n\left(\text{across $\mcl A_n$} \right) \geq e^{-\xi C'} \right] \geq q ,\quad\forall n\in\BB N .
\eqe

Let $K\geq K_*$ (to be chosen later in a manner depending only on $\xi$). By~\eqref{eqn-base-case0} applied for $n\in[1,K ]_{\BB Z}$, the condition~\eqref{eqn-ind-hyp} holds with $R = e^{-\xi C'}$ for each $n\in [K,2K]_{\BB Z}$. By Lemma~\ref{lem-induct},  
\eqb \label{eqn-base-case1}
\BB P\left[ D_n\left(\text{across $\mcl A_n$} \right) \geq \frac14 K e^{-\xi (C+C')} \right] \geq q ,\quad\forall n \in [K , 2K]_{\BB Z} .
\eqe
This is our desired a priori lower bound. 
\medskip

\noindent\textit{Step 2: inductive argument.}
We now let 
\eqb \label{eqn-K-choice} 
K = K(\xi) :=  \max\left\{ K_* ,  16 e^{\xi(C+C')} + 1 \right\} . 
\eqe
We will prove by induction on $n$ that for each $n\geq K$, 
\eqb \label{eqn-induct-statement}
\BB P\left[ D_n\left(\text{across $\mcl A_n$} \right) \geq  2^{n/K}    \right] \geq q  . 
\eqe
Indeed, it follows from~\eqref{eqn-base-case1} and our choice of $K$ that~\eqref{eqn-induct-statement} holds for all $n\in [K,2K]_{\BB Z}$.
This gives the base case.  

For the inductive step, assume that $n \geq 2K$ and~\eqref{eqn-induct-statement} has been proven with $n$ replaced by any $m \in [K,n-1]_{\BB Z}$. 
Then~\eqref{eqn-ind-hyp} of Lemma~\ref{lem-induct} holds for our given value of $n$ and with $R = 2^{(n-K)/K}$. Therefore, Lemma~\ref{lem-induct} implies that
\eqbn
\BB P\left[ D_n\left(\text{across $\mcl A_n$} \right) \geq \frac14 K   e^{-\xi C} 2^{(n-K)/K} \right] \geq q .
\eqen
By our choice of $K$, we have $\frac14 K e^{-\xi C} \geq 4$, so $\frac14 K e^{-\xi C} 2^{(n-K)/K} \geq 2^{n/K}$. Thus~\eqref{eqn-induct-statement} holds for $n$. This completes the induction, so we get that~\eqref{eqn-induct-statement} holds for all $n \geq K$. 

By~\eqref{eqn-induct-statement}, 
\eqbn
\liminf_{n\rta\infty} \BB P\left[ D_n\left(\text{across $\mcl A_n$} \right) \geq  2^{n/K}  \right] \geq q .
\eqen
By~\eqref{eqn-K-choice}, $(\log 2) /K \geq c_0 e^{-c_1\xi}$ for constants $c_0,c_1 > 0$ depending only on $q$. 
This gives~\eqref{eqn-lfpp-lower}. 
\end{proof}

\bibliography{cibib,bib-Av}

\def\cprime{$'$}
\begin{thebibliography}{DKRV16}

\bibitem[ALS20]{als-isomorphism}
J.~Aru, T.~Lupu, and A.~Sep\'{u}lveda.
\newblock The {F}irst {P}assage {S}ets of the 2{D} {G}aussian {F}ree {F}ield:
  {C}onvergence and {I}somorphisms.
\newblock {\em Comm. Math. Phys.}, 375(3):1885--1929, 2020, \arxiv{1805.09204}.
  \MR{4091511}

\bibitem[Ang19]{ang-discrete-lfpp}
M.~Ang.
\newblock Comparison of discrete and continuum {L}iouville first passage
  percolation.
\newblock {\em Electron. Commun. Probab.}, 24:Paper No. 64, 12, 2019,
  \arxiv{1904.09285}. \MR{4029433}

\bibitem[APPS20]{apps-central-charge}
M.~{Ang}, M.~{Park}, J.~{Pfeffer}, and S.~{Sheffield}.
\newblock {Brownian loops and the central charge of a Liouville random
  surface}.
\newblock {\em ArXiv e-prints}, May 2020, \arxiv{2005.11845}.

\bibitem[AS18]{aru-sepulveda-2valued}
J.~Aru and A.~Sep\'{u}lveda.
\newblock Two-valued local sets of the 2{D} continuum {G}aussian free field:
  connectivity, labels, and induced metrics.
\newblock {\em Electron. J. Probab.}, 23:Paper No. 61, 35, 2018,
  \arxiv{1801.03828}. \MR{3827968}

\bibitem[AT07]{adler-taylor-fields}
R.~J. Adler and J.~E. Taylor.
\newblock {\em Random fields and geometry}.
\newblock Springer Monographs in Mathematics. Springer, New York, 2007.
  \MR{2319516 (2008m:60090)}

\bibitem[BDZ16]{bdz-gff-max}
M.~Bramson, J.~Ding, and O.~Zeitouni.
\newblock Convergence in law of the maximum of the two-dimensional discrete
  {G}aussian free field.
\newblock {\em Comm. Pure Appl. Math.}, 69(1):62--123, 2016, \arxiv{1301.6669}.
  \MR{3433630}

\bibitem[{Ber}]{berestycki-lqg-notes}
N.~{Berestycki}.
\newblock {I}ntroduction to the {G}aussian {F}ree {F}ield and {L}iouville
  {Q}uantum {G}ravity.
\newblock {A}vailable at
  \url{https://homepage.univie.ac.at/nathanael.berestycki/articles.html}.

\bibitem[Bor75]{borell-tis1}
C.~Borell.
\newblock The {B}runn-{M}inkowski inequality in {G}auss space.
\newblock {\em Invent. Math.}, 30(2):207--216, 1975. \MR{0399402}

\bibitem[DDDF20]{dddf-lfpp}
J.~Ding, J.~Dub\'{e}dat, A.~Dunlap, and H.~Falconet.
\newblock Tightness of {L}iouville first passage percolation for {$\gamma \in
  (0,2)$}.
\newblock {\em Publ. Math. Inst. Hautes \'{E}tudes Sci.}, 132:353--403, 2020,
  \arxiv{1904.08021}. \MR{4179836}

\bibitem[DG18]{dg-lqg-dim}
J.~{Ding} and E.~{Gwynne}.
\newblock {The fractal dimension of {L}iouville quantum gravity: universality,
  monotonicity, and bounds}.
\newblock {\em {C}ommunications in {M}athematical {P}hysics}, 374:1877--1934,
  2018, \arxiv{1807.01072}.

\bibitem[DG20]{dg-supercritical-lfpp}
J.~{Ding} and E.~{Gwynne}.
\newblock {Tightness of supercritical Liouville first passage percolation}.
\newblock {\em {J}ournal of the {E}uropean {M}athematical {S}ociety}, to
  appear, 2020, \arxiv{2005.13576}.

\bibitem[DG21a]{dg-confluence}
J.~{Ding} and E.~{Gwynne}.
\newblock {Regularity and confluence of geodesics for the supercritical
  Liouville quantum gravity metric}.
\newblock {\em ArXiv e-prints}, April 2021, \arxiv{2104.06502}.

\bibitem[DG21b]{dg-uniqueness}
J.~{Ding} and E.~{Gwynne}.
\newblock {Uniqueness of the critical and supercritical Liouville quantum
  gravity metrics}.
\newblock {\em ArXiv e-prints}, September 2021, \arxiv{2110.00177}.

\bibitem[DKRV16]{dkrv-lqg-sphere}
F.~David, A.~Kupiainen, R.~Rhodes, and V.~Vargas.
\newblock Liouville quantum gravity on the {R}iemann sphere.
\newblock {\em Comm. Math. Phys.}, 342(3):869--907, 2016, \arxiv{1410.7318}.
  \MR{3465434}

\bibitem[DL18]{ding-li-chem-dist}
J.~Ding and L.~Li.
\newblock Chemical distances for percolation of planar {G}aussian free fields
  and critical random walk loop soups.
\newblock {\em Comm. Math. Phys.}, 360(2):523--553, 2018, \arxiv{1605.04449}.
  \MR{3800790}

\bibitem[DS11]{shef-kpz}
B.~Duplantier and S.~Sheffield.
\newblock Liouville quantum gravity and {KPZ}.
\newblock {\em Invent. Math.}, 185(2):333--393, 2011, \arxiv{1206.0212}.
  \MR{2819163 (2012f:81251)}

\bibitem[DW18]{dw-gff-crossing}
J.~{Ding} and M.~{Wirth}.
\newblock {Percolation for level-sets of Gaussian free fields on metric
  graphs}.
\newblock {\em ArXiv e-prints}, July 2018, \arxiv{1807.11117}.

\bibitem[DWW20]{dww-gff-crossing}
J.~{Ding}, M.~{Wirth}, and H.~{Wu}.
\newblock {Crossing estimates from metric graph and discrete GFF}.
\newblock {\em ArXiv e-prints}, January 2020, \arxiv{2001.06447}.

\bibitem[Fer75]{fernique-criterion}
X.~Fernique.
\newblock Regularit\'e des trajectoires des fonctions al\'eatoires gaussiennes.
\newblock pages 1--96. Lecture Notes in Math., Vol. 480, 1975. \MR{0413238}

\bibitem[GHPR20]{ghpr-central-charge}
E.~Gwynne, N.~Holden, J.~Pfeffer, and G.~Remy.
\newblock Liouville quantum gravity with matter central charge in (1, 25): a
  probabilistic approach.
\newblock {\em Comm. Math. Phys.}, 376(2):1573--1625, 2020, \arxiv{1903.09111}.
  \MR{4103975}

\bibitem[GM21]{gm-uniqueness}
E.~Gwynne and J.~Miller.
\newblock Existence and uniqueness of the {L}iouville quantum gravity metric
  for {$\gamma\in(0,2)$}.
\newblock {\em Invent. Math.}, 223(1):213--333, 2021, \arxiv{1905.00383}.
  \MR{4199443}

\bibitem[GP19]{gp-lfpp-bounds}
E.~{Gwynne} and J.~{Pfeffer}.
\newblock {Bounds for distances and geodesic dimension in Liouville first
  passage percolation}.
\newblock {\em {E}lectronic {C}ommunications in {P}robability}, 24:no. 56, 12,
  2019, \arxiv{1903.09561}.

\bibitem[Gwy20]{gwynne-ams-survey}
E.~Gwynne.
\newblock Random surfaces and {L}iouville quantum gravity.
\newblock {\em Notices Amer. Math. Soc.}, 67(4):484--491, 2020,
  \arxiv{1908.05573}. \MR{4186266}

\bibitem[Lup16]{lupu-coupling}
T.~Lupu.
\newblock From loop clusters and random interlacements to the free field.
\newblock {\em Ann. Probab.}, 44(3):2117--2146, 2016, \arxiv{1402.0298}.
  \MR{3502602}

\bibitem[LW21]{lw-gff-crossing}
M.~Liu and H.~Wu.
\newblock Scaling limits of crossing probabilities in metric graph {GFF}.
\newblock {\em Electron. J. Probab.}, 26:Paper No. 37, 46, 2021,
  \arxiv{2004.09104}. \MR{4235488}

\bibitem[MQ20]{mq-geodesics}
J.~{Miller} and W.~{Qian}.
\newblock {The geodesics in Liouville quantum gravity are not Schramm-Loewner
  evolutions}.
\newblock {\em {Probab. Theory Related Fields}}, 177(3-4):677--709, 2020,
  \arxiv{1812.03913}.

\bibitem[{Pfe}21]{pfeffer-supercritical-lqg}
J.~{Pfeffer}.
\newblock {Weak Liouville quantum gravity metrics with matter central charge
  $\mathbf{c} \in (-\infty, 25)$}.
\newblock {\em ArXiv e-prints}, April 2021, \arxiv{2104.04020}.

\bibitem[SCs74]{borell-tis2}
V.~N. Sudakov and B.~S. Cirel\cprime~son.
\newblock Extremal properties of half-spaces for spherically invariant
  measures.
\newblock {\em Zap. Nau\v cn. Sem. Leningrad. Otdel. Mat. Inst. Steklov.
  (LOMI)}, 41:14--24, 165, 1974.
\newblock Problems in the theory of probability distributions, II. \MR{0365680}

\bibitem[Szn12]{Szn}
A.-S. Sznitman.
\newblock {\em Topics in occupation times and {G}aussian free fields},
  volume~16.
\newblock European Mathematical Society, 2012.

\bibitem[Tas16]{tassion-rsw}
V.~Tassion.
\newblock Crossing probabilities for {V}oronoi percolation.
\newblock {\em Ann. Probab.}, 44(5):3385--3398, 2016, \arxiv{1410.6773}.
  \MR{3551200}

\end{thebibliography}
%\bibliography{addbib}
%\bibliographystyle{alpha}
\bibliographystyle{hmralphaabbrv}

\end{document}